\newtheorem{thm}{Theorem}[section]
\newtheorem{cor}[thm]{Corollary}
\newtheorem{conj}[thm]{Conjecture}
\newtheorem{lem}[thm]{Lemma}
\newcommand{\binq}[2]{\genfrac{[}{]}{0mm}{0}{#1}{#2}}
\newcommand{\tbnq}[2]{\genfrac{[}{]}{0mm}{1}{#1}{#2}}
\numberwithin{equation}{section}
\renewcommand{\thefootnote}{*}
\begin{document}

\begin{center}
{\large\bf Some $q$-supercongruences modulo the fourth power of a
cyclotomic polynomial \footnote{This work is supported by the
National Natural Science Foundation of China (No. 11661032).}}
\end{center}

\renewcommand{\thefootnote}{$\dagger$}

\vskip 2mm \centerline{Chuanan Wei}
\begin{center}
{School of Biomedical Information and Engineering,\\ Hainan Medical University, Haikou 571199, China\\
{\tt weichuanan78@163.com  } }
\end{center}


\vskip 0.7cm \noindent{\bf Abstract.} In terms of the creative
microscoping method recently introduced by Guo and Zudilin and the
Chinese remainder theorem for coprime polynomials, we establish a
$q$-supercongruence with two parameters modulo $[n]\Phi_n(q)^3$.  Here $[n]=(1-q^n)/(1-q)$ and
$\Phi_n(q)$ is the $n$-th cyclotomic polynomial in $q$.
In particular, we confirm a recent conjecture of Guo and give a complete
$q$-analogue of Long's supercongruence. The latter is also a generalization
of a recent $q$-supercongruence obtained by Guo and Schlosser.

\vskip 3mm \noindent {\it Keywords}: basic hypergeometric series;
Watson's $_8\phi_7$ transformation; $q$-supercongruence

 \vskip 0.2cm \noindent{\it AMS
Subject Classifications:} 33D15; 11A07; 11B65

\section{Introduction}
For a complex variable $x$, define the shifted-factorial to be
\[(x)_{0}=1\quad \text{and}\quad (x)_{n}
=x(x+1)\cdots(x+n-1)\quad \text{when}\quad n\in\mathbb{N}.\]
 In his second letter to Hardy on February 27, 1913, Ramanujan mentioned the
identity
\begin{equation}\label{ramanujan}
\sum_{k=0}^{\infty}(-1)^k(4k+1)\frac{(\frac{1}{2})_k^5}{k!^5}=\frac{2}{\Gamma(\frac{3}{4})^4},
\end{equation}
where $\Gamma(x)$ is the Gamma function. In 1997, Van Hamme
\cite[(A.2)]{Hamme} conjectured that \eqref{ramanujan} possesses the
nice $p$-adic analogue:
\begin{equation}\label{hamme-b}
\sum_{k=0}^{(p-1)/2}(-1)^k(4k+1)\frac{(\frac{1}{2})_k^5}{k!^5}\equiv
\begin{cases} \displaystyle -\frac{p}{\Gamma_p(\frac{3}{4})^4}  \pmod{p^3}, &\text{if $p\equiv 1\pmod 4$,}\\[10pt]
 0\pmod{p^3}, &\text{if $p\equiv 3\pmod 4$.}
\end{cases}
\end{equation}
Here and throughout the paper, $p$ always denotes an odd prime and
$\Gamma_p(x)$ is the $p$-adic Gamma function. The supercongruence
\eqref{hamme-b} was later proved by McCarthy and Osburn
\cite{McCarthy}. In 2015, Swisher \cite{Swisher} showed that
\eqref{hamme-b} also holds modulo $p^5$ for $p\equiv 1\pmod 4$ and
$p>5$. Recently, Liu \cite{Liu} found another partial generalization
of it: for $p\equiv 3\pmod 4$ and $p>3$,
\begin{equation}\label{Liu}
\sum_{k=0}^{(p-1)/2}(-1)^k(4k+1)\frac{(\frac{1}{2})_k^5}{k!^5}\equiv
-\frac{p^3}{16}\Gamma_p\bigg(\frac{1}{4}\bigg)^4\pmod{p^4}.
\end{equation}

It is known that some of the truncated hypergeometric series are
related to the number of rational points on certain algebraic
varieties over finite fields and further to coefficients of modular
forms. For example, on the basis of the result of Ahlgren and Ono in
\cite{Ahlgren-Ono}, Kilbourn \cite{Kilbourn} proved Van Hamme¡¯s
(M.2) supercongruence:
\begin{equation}\label{Hamme-c}
\sum_{k=0}^{(p-1)/2}\frac{(\frac{1}{2})_k^4}{k!^4}\equiv
a_p\pmod{p^{3}},
\end{equation}
where $a_p$ is the $p$-th coefficient of a weight 4 modular form
\begin{equation*}
\eta(2z)^4\eta(4z)^4:=q\prod_{n=1}^{\infty}(1-q^{2n})^4(1-q^{4n})^4,\quad
q=e^{2i\pi z}.
\end{equation*}
 In 2011, Long \cite{LR} obtained the following two supercongruences:
\begin{align}
\sum_{k=0}^{(p-1)/2}(4k+1)\frac{(\frac{1}{2})_k^4}{k!^4} &\equiv p\pmod{p^{4}},  \label{long-a} \\[5pt]
\sum_{k=0}^{(p-1)/2}(4k+1)\frac{(\frac{1}{2})_k^6}{k!^6} &\equiv
p\sum_{k=0}^{(p-1)/2}\frac{(\frac{1}{2})_k^4}{k!^4}\pmod{p^4},  \label{long-c}
\end{align}
where $p>3$. According to
\eqref{Hamme-c}, the supercongruence \eqref{long-c} can be written
as
\begin{equation*}
\sum_{k=0}^{(p-1)/2}(4k+1)\frac{(\frac{1}{2})_k^6}{k!^6} \equiv
pa_p\pmod{p^4}\quad\text{for}\ p>3.
\end{equation*}

 For two complex numbers $x$ and $q$, define the $q$-shifted factorial to be
 \begin{equation*}
(x;q)_{0}=1\quad\text{and}\quad
(x;q)_n=(1-x)(1-xq)\cdots(1-xq^{n-1})\quad \text{when}\quad
n\in\mathbb{N}.
 \end{equation*}
For shortening many of the formulas in this paper, we adopt the
notation
\begin{equation*}
(x_1,x_2,\dots,x_r;q)_{n}=(x_1;q)_{n}(x_2;q)_{n}\cdots(x_r;q)_{n}.
 \end{equation*}
Following Gasper and Rahman \cite{Gasper}, define the basic
hypergeometric series by
$$
_{r}\phi_{s}\left[\begin{array}{c}
a_1,a_2,\ldots,a_{r}\\
b_1,b_2,\ldots,b_{s}
\end{array};q,\, z
\right] =\sum_{k=0}^{\infty}\frac{(a_1,a_2,\ldots, a_{r};q)_k}
{(q,b_1,b_2,\ldots,b_{s};q)_k}\bigg\{(-1)^kq^{\binom{k}{2}}\bigg\}^{1+s-r}z^k.
$$
Then the $q$-Chu--Vandermonde sum (cf. \cite[Appendix
(II.6)]{Gasper}), a terminating $q$-analogue of Whipple's $_3F_2$
sum (cf. \cite[Appendix (II.19)]{Gasper}) and Watson's $_8\phi_7$ transformation (cf. \cite[Appendix
(III.18)]{Gasper}) can be expressed as follows:
\begin{align}
&\qquad\qquad\qquad\qquad
 _{2}\phi_{1}\!\left[\begin{array}{cccccccc}
 q^{-n},  &b \\ &c
\end{array};q,\, \frac{cq^n}{b} \right]
=\frac{(c/b;q)_{n}}{(c;q)_{n}}, \label{eq:q-Chu-Vandermonde}
\\[10pt]
&\quad
 _{4}\phi_{3}\!\left[\begin{array}{cccccccc}
 q^{-n},  &q^{1+n}, &b, & -b \\
 &c,  &b^2q/c, &-q
\end{array};q,\, q \right]
=q^{\binom{1+n}{2}}\frac{(b^2q^{1-n}/c, cq^{-n};q^2)_{n}}
 {(b^2q/c, c;q)_{n}}, \label{eq:whipple-a}
\\[10pt]
& _{8}\phi_{7}\!\left[\begin{array}{cccccccc}
a,& qa^{\frac{1}{2}},& -qa^{\frac{1}{2}}, & b,    & c,    & d,    & e,    & q^{-n} \\
  & a^{\frac{1}{2}}, & -a^{\frac{1}{2}},  & aq/b, & aq/c, & aq/d, & aq/e, & aq^{n+1}
\end{array};q,\, \frac{a^2q^{n+2}}{bcde}
\right] \notag\\[5pt]
&\quad =\frac{(aq, aq/de;q)_{n}} {(aq/d, aq/e;q)_{n}}
\,{}_{4}\phi_{3}\!\left[\begin{array}{c}
aq/bc,\ d,\ e,\ q^{-n} \\
aq/b,\, aq/c,\, deq^{-n}/a
\end{array};q,\, q
\right]. \label{eq:watson}
\end{align}

Recently, Guo \cite{Guo-new} proved that, for any
positive odd integer $n\equiv3\pmod{4}$,
\begin{equation}
\sum_{k=0}^{(n-1)/2}\frac{(q;q^2)_k^2(q^2;q^4)_k}{(q^2;q^2)_k^2(q^4;q^4)_k}q^{2k}\equiv
[n]\frac{(q^3;q^4)_{(n-1)/2}}{(q^5;q^4)_{(n-1)/2}}\pmod{\Phi_n(q)^3}
\label{guo-a}
\end{equation}
and proposed the following conjecture: for any positive odd integer
$n\equiv3\pmod{4}$,
\begin{align}
\sum_{k=0}^{M}(-1)^k[4k+1]\frac{(q;q^2)_k^4(q^2;q^4)_k}{(q^2;q^2)_k^4(q^4;q^4)_k}q^k
\equiv[n]^2q^{(1+n)/2}\frac{(q^3;q^4)_{(n-1)/2}}{(q^5;q^4)_{(n-1)/2}}\pmod{[n]\Phi_n(q)^3},
\label{guo-conjecture}
\end{align}
which is a $q$-analogue of \eqref{Liu}. Here and throughout the
paper, $M$ is always equal to $(n-1)/2$ or $(n-1)$. Some different
works can be stated as follows. Guo and Zudilin \cite[Theorem
4.2]{GuoZu} found the formula with two parameters: for a positive
odd integer $n$,
\begin{align}
&\sum_{k=0}^{M}[4k+1]\frac{(aq,q/a,q/c,q;q^2)_k}{(q^2/a,aq^2,cq^2,q^2;q^2)_k}c^k
\notag\\[5pt]
&\:\equiv
[n]\frac{(c/q)^{(n-1)/2}(q^2/c;q^2)_{(n-1)/2}}{(cq^2;q^2)_{(n-1)/2}}\pmod{[n](1-aq^n)(a-q^n)}.
\label{guo-b}
\end{align}
 Guo and Wang \cite{Guo-Wang} achieved a $q$-analogue of
\eqref{long-a}: for any positive odd integer $n$,
\begin{equation}
\sum_{k=0}^{M}[4k+1]\frac{(q;q^2)_k^4}{(q^2;q^2)_k^4}\equiv
[n]q^{(1-n)/2}+[n]^3q^{(1-n)/2}\frac{(n^2-1)(1-q)^2}{24}\pmod{[n]\Phi_n(q)^3}.
\label{guo-c}
\end{equation}
Guo and Schlosser \cite[Theorems 2.1 and 2.2]{GS}) proved that, for
a positive odd integer $n$,
\begin{align}
&\sum_{k=0}^{M}[4k+1]\frac{(q;q^2)_k^6}{(q^2;q^2)_k^6}q^k
\equiv[n]q^{(1-n)/2}\sum_{k=0}^{(n-1)/2}\frac{(q;q^2)_k^4}{(q^2;q^2)_k^4}q^{2k}\hspace*{-1.5mm}\pmod{[n]\Phi_n(q)^2},
\label{guo-d}\\[5pt]
&\sum_{k=0}^{M}(-1)^k[4k+1]\frac{(q;q^2)_k^5}{(q^2;q^2)_k^5}q^{k^2+k}
\equiv[n]q^{(1-n)/2}\sum_{k=0}^{(n-1)/2}\frac{(q;q^2)_k^3}{(q^2;q^2)_k^3}q^{2k}\hspace*{-1.5mm}\pmod{[n]\Phi_n(q)^2}.
\label{guo-e}
\end{align}
 The
$q$-supercongruence \eqref{guo-d} is a $q$-analogue of
\eqref{long-c}, where the modulo $p^4$ condition is replaced by the
weaker condition modulo $p^3$. An indeed $q$-analogue of \eqref{hamme-b} (cf. \cite{Guo-a2} and \cite {WY})
can be expressed as follows:  for any positive odd integer $n$,
\begin{align}\label{guo-f}
&\sum_{k=0}^{M}(-1)^k[4k+1]\frac{(q;q^2)_k^4(q^2;q^4)_k}{(q^2;q^2)_k^4(q^4;q^4)_k}q^k
\notag\\[5pt]
&\equiv
\begin{cases} \displaystyle [n]\frac{(q^2;q^4)_{(n-1)/4}^2}{(q^4;q^4)_{(n-1)/4}^2}  \pmod{[n]\Phi_n(q)^2}, &\text{if $n\equiv 1\pmod 4$,}\\[15pt]
 0  \pmod{[n]\Phi_n(q)^2}, &\text{if $n\equiv 3\pmod 4$.}
\end{cases}
\end{align}
We point out that more $q$-analogues of supercongruences can be
found in \cite{GuoAdcance,Guo-jmaa,GS20c,LP,NP,Tauraso,WY-a,Zu19}
with various techniques.

Inspired by these work just mentioned, we shall establish the
following theorem.

\begin{thm}\label{thm-a}
Let $n$ be a positive odd integer. Then
\begin{align}
&\sum_{k=0}^{M}[4k+1]\frac{(q;q^2)_k^4(cq,dq;q^2)_k}{(q^2;q^2)_k^4(q^2/c,q^2/d;q^2)_k}\bigg(\frac{q}{cd}\bigg)^k
\notag\\[5pt]\:
&\:\equiv\bigg\{[n]q^{(1-n)/2}+[n]^3q^{(1-n)/2}\frac{(n^2-1)(1-q)^2}{24}\bigg\}
\notag\\[5pt]
&\quad\times\sum_{k=0}^{(n-1)/2}\frac{(q;q^2)_k^3(q/cd;q^2)_k}{(q^2;q^2)_k^2(q^2/c,q^2/d;q^2)_k}q^{2k}\pmod{[n]\Phi_n(q)^3}.  \label{eq:wei}
\end{align}
\end{thm}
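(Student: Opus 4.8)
\medskip
The plan is to prove a one-parameter refinement by the creative microscoping method and then let the parameter tend to $1$. Since for $(n+1)/2\le k\le n-1$ the factor $(q;q^2)_k^4$ contains $(1-q^n)^4$ while none of the denominator factors carry a factor of $\Phi_n(q)$, every such term is divisible by $[n]\Phi_n(q)^3$; hence the two admissible values $M=(n-1)/2$ and $M=n-1$ give the same congruence, and it suffices to take $M=(n-1)/2$. I would then deform two of the four numerator factors $(q;q^2)_k$ (and the matching denominator factors) by an extra parameter $a$, setting
\[
L_n(a)=\sum_{k=0}^{(n-1)/2}[4k+1]\frac{(aq,q/a;q^2)_k(q;q^2)_k^2(cq,dq;q^2)_k}{(aq^2,q^2/a;q^2)_k(q^2;q^2)_k^2(q^2/c,q^2/d;q^2)_k}\Big(\frac{q}{cd}\Big)^{k},
\]
so that $L_n(1)$ is the left-hand side of \eqref{eq:wei}. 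Two features are crucial: $L_n(a)=L_n(1/a)$, and at $a=q^{-n}$ the factor $(aq;q^2)_k=(q^{1-n};q^2)_k$ forces the sum to terminate at $k=(n-1)/2$.

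The engine is Watson's transformation \eqref{eq:watson}. Applying it with base $q^2$ to the terminating very-well-poised $_8\phi_7$ equal to $L_n(q^{-n})$ — its leading parameter being $q$ and its terminating parameter $q^{1-n}$, the remaining numerator parameters $q^{1+n},q,cq,dq$ matching exactly the factors of $L_n(q^{-n})$, and the balancing condition reproducing the argument $q/cd$ — transforms it into an explicit closed form $W:=L_n(q^{-n})$, namely a constant times a terminating $_4\phi_3$ that, modulo $\Phi_n(q)$, becomes the $_4\phi_3$-sum on the right of \eqref{eq:wei}. As in the Guo--Zudilin formula \eqref{guo-b}, the difference $L_n(a)-W$ vanishes at $a=q^{-n}$ and, by the symmetry $a\leftrightarrow 1/a$, also at $a=q^{n}$; since $1-aq^{n}$ and $a-q^{n}$ are coprime polynomials in $a$, the Chinese remainder theorem for coprime polynomials yields the parametric congruence $L_n(a)\equiv W\pmod{[n](1-aq^{n})(a-q^{n})}$, the extra factor $[n]$ being supplied by the divisibility of $L_n(a)-W$ modulo each $\Phi_d(q)$ with $d\mid n$, $d>1$ (checked via a terminating summation such as \eqref{eq:q-Chu-Vandermonde} or \eqref{eq:whipple-a}). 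Putting $a=1$ then gives $L_n(1)\equiv W\pmod{[n](1-q^{n})^2}$, whose $\Phi_n(q)$-content is $\Phi_n(q)^3$ and which already supplies the required congruence modulo every $\Phi_d(q)$ with $1<d<n$.

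The main obstacle is to gain the one further power of $\Phi_n(q)$ needed for the full modulus $[n]\Phi_n(q)^3=\Phi_n(q)^4\prod_{1<d\mid n,\;d<n}\Phi_d(q)$, together with the explicit correction $[n]^3q^{(1-n)/2}(n^2-1)(1-q)^2/24$. Because the one-parameter scheme above controls $L_n(1)$ only to $\Phi_n(q)^3$, I expect to need a second-order refinement of the microscoping — for instance matching the first $a$-derivative of $L_n(a)-W$ at $a=q^{\pm n}$, or a dedicated $q$-congruence for the closed form $W$ — so as to promote $L_n(1)\equiv W$ to the fourth power. One then evaluates $W$ modulo $\Phi_n(q)^4$: using $q^{\pm n}=1\mp(1-q)[n]+\cdots$, the parameters $q^{1\pm n}$ appearing in $W$ differ from their target value $q$ by multiples of $\Phi_n(q)$, so that on expanding the Watson prefactor and the $_4\phi_3$ the quadratic terms collect into $q$-harmonic sums of the type $\sum_{j=1}^{(n-1)/2}$ that evaluate to $(n^2-1)/24$, exactly as in the proof of \eqref{guo-c}. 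Finally the Chinese remainder theorem for coprime polynomials recombines the resulting congruence modulo $\Phi_n(q)^4$ with the congruences modulo $\Phi_d(q)$, $1<d<n$, already obtained in the previous step, yielding \eqref{eq:wei} modulo $[n]\Phi_n(q)^3$.
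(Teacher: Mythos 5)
There is a genuine gap, and it sits exactly at the heart of the theorem. Your Stage~1 (deform by a single parameter $a$, evaluate at $a=q^{\pm n}$ via Watson's transformation \eqref{eq:watson}, and use coprimality of $1-aq^n$ and $a-q^n$ plus a root-of-unity argument for the factor $[n]$) is sound, but, as you yourself note, setting $a=1$ in $L_n(a)\equiv W\pmod{[n](1-aq^n)(a-q^n)}$ only yields a modulus $[n](1-q^n)^2$, whose $\Phi_n(q)$-content is $\Phi_n(q)^3$. This reproduces, in essence, the known results of type \eqref{guo-b}, \eqref{guo-d}; it does not reach $[n]\Phi_n(q)^3$, whose $\Phi_n(q)$-content is $\Phi_n(q)^4$. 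The step you defer (``I expect to need a second-order refinement of the microscoping --- for instance matching the first $a$-derivative of $L_n(a)-W$ at $a=q^{\pm n}$'') is precisely the missing idea, and neither of your two suggestions is workable as stated: there is no transformation formula controlling $\frac{d}{da}L_n(a)$ at $a=q^{\pm n}$, and evaluating $W$ more precisely modulo $\Phi_n(q)^4$ is pointless as long as $L_n(1)\equiv W$ is only known modulo $\Phi_n(q)^3$.

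What the paper actually does to gain the fourth power is to introduce a \emph{second} deformation parameter $b$, i.e.\ to work with the six-factor sum with numerator parameters $(aq,q/a,q/b,cq,dq,q;q^2)_k$ and argument $(bq/cd)^k$. This sum terminates for two \emph{independent} reasons: at $a=q^{\pm n}$ (giving Theorem \ref{thm-d}, a congruence modulo $\Phi_n(q)(1-aq^n)(a-q^n)$ with one Watson closed form) and at $b=q^{n}$ (giving Lemma \ref{lem-d}, a congruence modulo $(b-q^n)$ with a \emph{different} Watson closed form). The Chinese remainder theorem for the coprime polynomials $(1-aq^n)(a-q^n)$ and $(b-q^n)$, with the explicit interpolating coefficients $\frac{(b-q^n)(ab-1-a^2+aq^n)}{(a-b)(1-ab)}$ and $\frac{(1-aq^n)(a-q^n)}{(a-b)(1-ab)}$, glues these into a single congruence \eqref{eq:par} modulo $\Phi_n(q)(1-aq^n)(a-q^n)(b-q^n)$. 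Letting $b\to1$ first raises the modulus to $\Phi_n(q)^2(1-aq^n)(a-q^n)$ (Theorem \ref{thm-g}), and only then letting $a\to1$ gives modulus $\Phi_n(q)^2(1-q^n)^2$, which is divisible by $[n]\Phi_n(q)^3$; moreover the explicit correction term $[n]^3q^{(1-n)/2}(n^2-1)(1-q)^2/24$ does not come from $q$-harmonic-sum expansions of $W$ (as you propose, by analogy with \eqref{guo-c}), but from applying L'H\^{o}spital's rule to the CRT coefficient $\Omega_q(a,n)$ as $a\to1$, after simplifying it modulo $\Phi_n(q)$ via Guo's congruences for $(aq^2,q^2/a;q^2)_{(n-1)/2}$ and $(aq,q/a;q^2)_{(n-1)/2}$. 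Your closing remarks (reduction to $M=(n-1)/2$, and recombining the $\Phi_n(q)^4$ congruence with the congruences modulo $\Phi_d(q)$ for proper divisors $d$, which the paper handles via \eqref{polynomial} and Lemma \ref{lem-c}) are fine, but without the two-parameter CRT mechanism the proof of the stated modulus cannot be completed.
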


When $cd=q$, the $q$-supercognruence \eqref{eq:wei} reduces to \eqref{guo-c}. When the parameters $c$
and $d$ are further specified,  we can confirm Guo's conjecture \eqref{guo-conjecture}, a $q$-analogue of \eqref{long-c}, five new
$q$-analogues of \eqref{long-a}, and some other conclusions from
this theorem.

The rest of the paper is arranged as follows. We shall display
several concrete
 $q$-supercongruences from Theorem \ref{thm-a} in Section 2. Via the
 creative microscoping method, a $q$-supercongruence with four parameters modulo $\Phi_n(q)(1-aq^n)(a-q^n)$, which includes \eqref{guo-b}
and \eqref{guo-d}--\eqref{guo-f} as special cases, will be derived
in Section 3. Then we utilize it and the Chinese remainder theorem
for coprime polynomials to deduce a $q$-supercongruence with three
parameters modulo $\Phi_n(q)^2(1-aq^n)(a-q^n)$ and prove Theorem
\ref{thm-a} in Section 4.
\section{Concrete $q$-supergruences from Theorem \ref{thm-a}}
Nine $q$-supercongruences
 modulo $[n]\Phi_n(q)^3$ from Theorem
\ref{thm-a} will be laid out. Above all, we give the following lemma.

\begin{lem}\label{lem-zero}
Let $n$ be a positive odd integer. Then
\begin{align*}
[n]^2\frac{(q^3;q^4)_{(n-1)/2}}{(q^5;q^4)_{(n-1)/2}}\equiv0\pmod{[n]}.
\end{align*}
\end{lem}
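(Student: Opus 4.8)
The plan is to read the congruence as a statement about the rational function
\[
R:=\frac{(q^3;q^4)_{(n-1)/2}}{(q^5;q^4)_{(n-1)/2}},
\]
where the assertion $[n]^2R\equiv0\pmod{[n]}$ means that $[n]^2R$ equals $[n]$ times a rational function whose denominator, in lowest terms, is coprime to $[n]$. Since $[n]=\prod_{d\mid n,\,d>1}\Phi_d(q)$ and $1-q^n$ has only simple roots, $[n]$ vanishes to order exactly $1$ at every primitive $d$-th root of unity $\zeta$ with $d\mid n$ and $d>1$. Writing $[n]^2R=[n]\cdot([n]R)$, it therefore suffices to prove that $\mathrm{ord}_{\zeta}(R)\ge-1$ for each such $\zeta$: then $[n]R$ has no pole at any root of $[n]$, so its denominator is coprime to $[n]$, which is precisely the desired conclusion.

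To estimate $\mathrm{ord}_{\zeta}(R)$, set $m=(n-1)/2$ and record that
\[
(q^3;q^4)_m=\prod_{j=0}^{m-1}\bigl(1-q^{4j+3}\bigr),\qquad (q^5;q^4)_m=\prod_{j=0}^{m-1}\bigl(1-q^{4j+5}\bigr).
\]
Because order is additive under products and quotients, $\mathrm{ord}_{\zeta}(R)$ is the number of indices $j\in\{0,\dots,m-1\}$ with $d\mid 4j+3$ minus the number with $d\mid 4j+5$. As $n$ is odd, $d$ is odd, so $\gcd(4,d)=1$ and $4$ is invertible modulo $d$; hence each of the conditions $d\mid 4j+3$ and $d\mid 4j+5$ pins down a single residue class of $j$ modulo $d$. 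Thus both quantities are counts of integers lying in a fixed residue class modulo $d$ inside the block $\{0,1,\dots,m-1\}$.

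The key step is then purely combinatorial: the number of integers in an interval of $m$ consecutive integers that fall in a prescribed residue class modulo $d$ is either $\lfloor m/d\rfloor$ or $\lceil m/d\rceil$, so any two such counts differ by at most $1$. In particular the count attached to the denominator exceeds the one attached to the numerator by at most $1$, giving $\mathrm{ord}_{\zeta}(R)\ge-1$, as needed. I expect the only delicate point—rather than a genuine obstacle—to be fixing the meaning of ``$\equiv0\pmod{[n]}$'' for a non-polynomial rational function and verifying that the threshold $\mathrm{ord}_\zeta(R)\ge-1$ is exactly what the cancellation of the simple zero of $[n]$ requires; the case $n=15$, $d=5$ (where $R$ does acquire a pole of order one) shows the bound is sharp, so no weaker estimate would close the argument.
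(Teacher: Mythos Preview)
Your argument is correct and entirely different from the paper's. You work directly with the $\Phi_d$-adic valuation of $R$: since each factor $1-q^e$ has $\Phi_d$-valuation $1$ or $0$ according as $d\mid e$ or not, and since $d\mid n$ is odd so that $4j+3$ and $4j+5$ each trace out a single residue class modulo $d$ as $j$ runs through $\{0,\dots,m-1\}$, the pigeonhole bound on residue-class counts immediately gives $\mathrm{ord}_\zeta(R)\ge -1$. The paper instead invokes the terminating ${}_4\phi_3$ evaluation
\[
{}_4\phi_3\!\left[\begin{array}{c}q^{1-n},\,q^{1+n},\,q,\,-q\\ q^{2+n},\,q^{2-n},\,-q^2\end{array};q^2,\,q^2\right]=[n]\,\frac{(q^3;q^4)_{(n-1)/2}}{(q^5;q^4)_{(n-1)/2}},
\]
then reduces the lemma to checking $[n]\frac{(q^{1-n},q^{1+n};q^2)_k}{(q^{2-n},q^{2+n};q^2)_k}\equiv0\pmod{[n]}$ term by term, which it does via a further rewriting using the $q$-Chu--Vandermonde sum. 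Your route is considerably more elementary---it requires no hypergeometric identities whatsoever---and would extend unchanged to quotients $(q^a;q^4)_m/(q^b;q^4)_m$ with other odd $a,b$; the paper's route, by contrast, exploits machinery already in play elsewhere (the same ${}_4\phi_3$ identity underlies \eqref{guo-a}) and makes the connection to the hypergeometric side explicit.
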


\begin{proof}
For two nonnegative integer $s,t$ with $s\leq t$, it is well known
that the $q$-binomial coefficient $\tbnq{t}{s}$ is a polynomial in
$q$ and
\begin{align}\label{polynomial}
\frac{(q;q^2)_t}{(q^2;q^2)_t}=\frac{1}{(-q;q)_t^2}\binq{2t}{t}.
\end{align}
By specifying the parameters in \eqref{eq:whipple-a}, Guo
\cite{Guo-new} discovered the identity
\begin{align*}
& _{4}\phi_{3}\!\left[\begin{array}{cccccccc}
 q^{1-n},  &q^{1+n}, &q,  &-q \\
 &q^{2+n}, &q^{2-n},  &-q^2
\end{array};q^2,\, q^2 \right]
=[n]\frac{(q^3;q^4)_{(n-1)/2}}{(q^5;q^4)_{(n-1)/2}}.
\end{align*}
For proving Lemma \ref{lem-zero}, it is
sufficient to show that
\begin{align}
[n]\frac{(q^{1-n},q^{1+n};q^2)_{k}}{(q^{2-n},q^{2+n};q^2)_{k}}\equiv0\pmod{[n]},
\label{eq:wei-ab}
\end{align}
where $0\leq k\leq(n-1)/2$. Through \eqref{eq:q-Chu-Vandermonde}, we
have
\begin{align*}
[n]\frac{(q^{1-n},q^{1+n};q^2)_{k}}{(q^{2-n},q^{2+n};q^2)_{k}}
&=q^{-k}[n]^2\frac{(q;q^2)_{(n-1)/2-k}}{(q^2;q^2)_{(n-1)/2-k}}\frac{(q^2;q^2)_{(n-1)/2+k}}{(q^3;q^2)_{(n-1)/2+k}}
\notag\\[5pt]
&=\frac{(q;q^2)_{(n-1)/2-k}}{(q^2;q^2)_{(n-1)/2-k}}
\notag\\[5pt]
&\quad\times\sum_{j=0}^{(n-1)/2+k}(-1)^jq^{j^2+j-k}\binq{(n-1)/2+k}{j}_{q^2}\frac{[n]^2}{[1+2j]}.
\end{align*}
Thus we verify the correctness of \eqref{eq:wei-ab}. This finishes
the proof of Lemma \ref{lem-zero}.
\end{proof}

 It is easy to understand that the factor $(q^2/c,q^2/d;q^2)_M$
 in the denominator of the left-hand side of  \eqref{eq:wei} is
 relatively prime to $\Phi_n(q)$ as $c\to1,d\to-1$ (some similar discussion will be omitted in the rest of the paper).
Choosing $c=1,d=-1$ in Theorem \ref{thm-a}, we obtain
\begin{align*}
\sum_{k=0}^{M}(-1)^k[4k+1]\frac{(q;q^2)_k^4(q^2;q^4)_k}{(q^2;q^2)_k^4(q^4;q^4)_k}q^k
&\equiv[n]q^{(1-n)/2}\bigg\{1+[n]^2\frac{(n^2-1)(1-q)^2}{24}\bigg\}
\end{align*}
\begin{align*}
&\quad\times\sum_{k=0}^{(n-1)/2}\frac{(q;q^2)_k^2(q^2;q^4)_k}{(q^2;q^2)_k^2(q^4;q^4)_k}q^{2k}\pmod{[n]\Phi_n(q)^3}.
\end{align*}
By means of Lemma \ref{lem-zero}, \eqref{guo-a} and the last relation, we get the formula: for a positive odd
integer $n\equiv3\pmod{4}$,
\begin{align}\label{guo-conjecture-a}
\sum_{k=0}^{M}(-1)^k[4k+1]\frac{(q;q^2)_k^4(q^2;q^4)_k}{(q^2;q^2)_k^4(q^4;q^4)_k}q^k
&\equiv[n]^2q^{(1-n)/2}\bigg\{1+[n]^2\frac{(n^2-1)(1-q)^2}{24}\bigg\}
\notag\\[5pt]
&\quad\times\frac{(q^3;q^4)_{(n-1)/2}}{(q^5;q^4)_{(n-1)/2}}\pmod{[n]\Phi_n(q)^3}.
\end{align}
It is routine to verify that
\begin{align}\label{guo-conjecture-b}
&[n]^2q^{(1-n)/2}\bigg\{1+[n]^2\frac{(n^2-1)(1-q)^2}{24}\bigg\}\frac{(q^3;q^4)_{(n-1)/2}}{(q^5;q^4)_{(n-1)/2}}
\notag\\[5pt]
&\quad\equiv[n]^2q^{(1+n)/2}\frac{(q^3;q^4)_{(n-1)/2}}{(q^5;q^4)_{(n-1)/2}}\pmod{[n]\Phi_n(q)^3}.
\end{align}
The combination of \eqref{guo-conjecture-a} and
\eqref{guo-conjecture-b} confirms Guo's conjecture
\eqref{guo-conjecture}.

Fixing $c=d=1$ in Theorem \ref{thm-a}, we achieve the $q$-analogue
of \eqref{long-c}.

\begin{cor}\label{corl-a}
Let $n$ be a positive odd integer. Then
\begin{align*}
\sum_{k=0}^{M}[4k+1]\frac{(q;q^2)_k^6}{(q^2;q^2)_k^6}q^k
&\equiv[n]q^{(1-n)/2}\bigg\{1+[n]^2\frac{(n^2-1)(1-q)^2}{24}\bigg\}\\[5pt]
&\quad\times\sum_{k=0}^{(n-1)/2}\frac{(q;q^2)_k^4}{(q^2;q^2)_k^4}q^{2k}\pmod{[n]\Phi_n(q)^3}.
\end{align*}
\end{cor}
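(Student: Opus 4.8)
The plan is to obtain Corollary \ref{corl-a} as a direct specialization of Theorem \ref{thm-a}, setting $c=d=1$ in the master $q$-supercongruence \eqref{eq:wei}. First I would observe that, after clearing denominators, \eqref{eq:wei} is a congruence between rational functions in the indeterminates $q,c,d$ modulo $[n]\Phi_n(q)^3$, so the substitution $c=d=1$ is legitimate provided the denominators appearing on both sides remain coprime to the modulus after the substitution. This is precisely the coprimality remark preceding the corollary: for $0\le k\le(n-1)/2$, and indeed for all $k\le n-1$ (needed for the case $M=n-1$ on the left), the factors $(q^2;q^2)_k$ arising from $(q^2/c,q^2/d;q^2)_k$ at $c=d=1$ contain only terms $1-q^{2j}$ with $1\le j<n$; since $n$ is odd, $\Phi_n(q)\mid 1-q^{2j}$ would force $n\mid 2j$, i.e. $n\mid j$, which is impossible in that range. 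Hence no factor of $\Phi_n(q)$ is created or destroyed, and the congruence survives the specialization.

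Next I would carry out the elementary simplification of each side at $c=d=1$. On the left, the extra factor $(cq,dq;q^2)_k/(q^2/c,q^2/d;q^2)_k$ collapses to $(q;q^2)_k^2/(q^2;q^2)_k^2$, which merges with the standing $(q;q^2)_k^4/(q^2;q^2)_k^4$ to yield $(q;q^2)_k^6/(q^2;q^2)_k^6$, while $(q/cd)^k$ becomes $q^k$; this reproduces $\sum_{k=0}^{M}[4k+1](q;q^2)_k^6/(q^2;q^2)_k^6\,q^k$. On the right, the bracketed prefactor $\{[n]q^{(1-n)/2}+[n]^3q^{(1-n)/2}(n^2-1)(1-q)^2/24\}$ does not involve $c,d$ and is unchanged, while inside the sum $(q/cd;q^2)_k\to(q;q^2)_k$ and the two denominator factors give $(q^2/c,q^2/d;q^2)_k\to(q^2;q^2)_k^2$, so that
\[
\sum_{k=0}^{(n-1)/2}\frac{(q;q^2)_k^3(q/cd;q^2)_k}{(q^2;q^2)_k^2(q^2/c,q^2/d;q^2)_k}q^{2k}\ \longrightarrow\ \sum_{k=0}^{(n-1)/2}\frac{(q;q^2)_k^4}{(q^2;q^2)_k^4}q^{2k}.
\]
Assembling these two specializations gives exactly the asserted $q$-supercongruence.

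Since the corollary is a pure specialization, I do not expect any genuine obstacle: all the analytic and combinatorial work is already contained in the proof of Theorem \ref{thm-a}. The only point requiring a line of care is the one flagged above, namely justifying that $c=d=1$ may be substituted into a congruence modulo $[n]\Phi_n(q)^3$ without disturbing the modulus; once the coprimality of the specialized denominators to $\Phi_n(q)$ is noted, the remaining manipulation of the $q$-shifted factorials is routine.
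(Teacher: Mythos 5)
Your proposal is correct and matches the paper's own treatment: the paper obtains Corollary \ref{corl-a} precisely by setting $c=d=1$ in Theorem \ref{thm-a}, with the admissibility of the specialization covered by the remark (preceding the corollaries) that the specialized denominator factors are relatively prime to $\Phi_n(q)$. Your additional verification that $\Phi_n(q)\nmid(q^2;q^2)_k$ for $k\le n-1$ when $n$ is odd is exactly the ``similar discussion'' the paper says it omits.
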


Setting $c=d=-1$ in Theorem \ref{thm-a}, we gain the first new
$q$-analogue of \eqref{long-a}.

\begin{cor}\label{corl-e}
Let $n$ be a positive odd integer. Then
\begin{align*}
\sum_{k=0}^{M}[4k+1]\frac{(q;q^2)_k^2(q^2;q^4)_k^2}{(q^2;q^2)_k^2(q^4;q^4)_k^2}q^k
&\equiv[n]q^{(1-n)/2}\bigg\{1+[n]^2\frac{(n^2-1)(1-q)^2}{24}\bigg\}\\[5pt]
&\quad\times\sum_{k=0}^{(n-1)/2}\frac{(q;q^2)_k^4}{(q^4;q^4)_k^2}q^{2k}\pmod{[n]\Phi_n(q)^3}.
\end{align*}
\end{cor}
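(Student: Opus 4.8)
The plan is to derive Corollary \ref{corl-e} as the specialization $c=d=-1$ of Theorem \ref{thm-a}, using only the elementary product rule $(a;q)_k(-a;q)_k=(a^2;q^2)_k$ to collapse the resulting $q$-shifted factorials. Before substituting, I would first confirm that putting $c=d=-1$ does not weaken the modulus. The only place a factor of $\Phi_n(q)$ could be introduced and then cancelled is the denominator $(q^2/c,q^2/d;q^2)_M$ on the left of \eqref{eq:wei}, which at $c=d=-1$ becomes $(-q^2;q^2)_M^2=\prod_{j=1}^{M}(1+q^{2j})^2$. A primitive $n$-th root of unity $\zeta$ satisfies $1+\zeta^{2j}=0$ only when $\zeta^{2j}=-1$, i.e. $\zeta^{4j}=1$ with $\zeta^{2j}\ne1$, forcing $n\mid 4j$; since $n$ is odd this means $n\mid j$, impossible for $1\le j\le M<n$. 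Hence $(-q^2;q^2)_M$ is coprime to $\Phi_n(q)$ and the specialization keeps the modulus $[n]\Phi_n(q)^3$ intact.

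Next I would perform the algebraic reduction on both sides, using the identity above with the base $q$ replaced by $q^2$: taking $a=q$ yields $(q;q^2)_k(-q;q^2)_k=(q^2;q^4)_k$, and taking $a=q^2$ yields $(q^2;q^2)_k(-q^2;q^2)_k=(q^4;q^4)_k$. On the left, $c=d=-1$ gives $cd=1$, so $(q/cd)^k=q^k$, while $(cq,dq;q^2)_k=(-q;q^2)_k^2$ and $(q^2/c,q^2/d;q^2)_k=(-q^2;q^2)_k^2$; regrouping the squared pairs turns the summand into $[4k+1](q;q^2)_k^2(q^2;q^4)_k^2/\{(q^2;q^2)_k^2(q^4;q^4)_k^2\}\,q^k$, exactly the series on the left of the corollary. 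On the right, $(q/cd;q^2)_k$ becomes $(q;q^2)_k$, merging with $(q;q^2)_k^3$ to give $(q;q^2)_k^4$, and the denominator $(q^2;q^2)_k^2(q^2/c,q^2/d;q^2)_k=(q^2;q^2)_k^2(-q^2;q^2)_k^2$ collapses to $(q^4;q^4)_k^2$; factoring $[n]q^{(1-n)/2}$ out of the bracketed prefactor recovers $[n]q^{(1-n)/2}\{1+[n]^2(n^2-1)(1-q)^2/24\}$.

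Because every manipulation is a genuine polynomial identity in $q$, no error terms arise and the congruence is inherited verbatim from Theorem \ref{thm-a}. I do not expect a real obstacle: the whole argument is a transparent specialization, and the single point that genuinely requires care is the coprimality check of the preceding paragraph, which ensures that clearing the denominator at $c=d=-1$ does not lower the power of $\Phi_n(q)$ in the modulus.
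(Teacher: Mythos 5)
Your proposal is correct and matches the paper's own proof: Corollary \ref{corl-e} is obtained there precisely by setting $c=d=-1$ in Theorem \ref{thm-a}, with the coprimality of $(q^2/c,q^2/d;q^2)_M$ to $\Phi_n(q)$ noted (and otherwise left implicit) exactly as you verify. Your explicit check that $(-q^2;q^2)_M$ has no root at a primitive $n$-th root of unity, and the collapsing via $(a;q^2)_k(-a;q^2)_k=(a^2;q^4)_k$, are just the details the paper omits.
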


Letting $c=-1,d\to\infty$ in Theorem \ref{thm-a}, we obtain the
second new $q$-analogue of \eqref{long-a}.

\begin{cor}\label{corl-f}
Let $n$ be a positive odd integer. Then
\begin{align*}
\sum_{k=0}^{M}[4k+1]\frac{(q;q^2)_k^3(q^2;q^4)_k}{(q^2;q^2)_k^3(q^4;q^4)_k}q^{k^2+k}
&\equiv[n]q^{(1-n)/2}\bigg\{1+[n]^2\frac{(n^2-1)(1-q)^2}{24}\bigg\}
\end{align*}
\begin{align*}
&\quad\times\sum_{k=0}^{(n-1)/2}\frac{(q;q^2)_k^3}{(q^2;q^2)_k(q^4;q^4)_k}q^{2k}\pmod{[n]\Phi_n(q)^3}.
\end{align*}
\end{cor}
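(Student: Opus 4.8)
The plan is to derive Corollary~\ref{corl-f} directly from Theorem~\ref{thm-a} by the specialization $c=-1$ together with the limit $d\to\infty$, so that no new congruence machinery is required; the entire task is to track how the $c$- and $d$-dependent $q$-shifted factorials collapse and to check that the modulus survives.

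First I would put $c=-1$ throughout \eqref{eq:wei}. This replaces $(cq;q^2)_k$ by $(-q;q^2)_k$ and $(q^2/c;q^2)_k$ by $(-q^2;q^2)_k$ on the left, and $(q^2/c;q^2)_k$ by $(-q^2;q^2)_k$ on the right. Then I would let $d\to\infty$ term by term in the (finite) sums on both sides. On the left the only surviving $d$-dependence sits in the product $(dq;q^2)_k\,(q/(cd))^k/(q^2/d;q^2)_k$; since $(dq;q^2)_k=(-d)^kq^{k^2}(1+O(1/d))$, $(q^2/d;q^2)_k\to1$, and $(q/(cd))^k=(-1)^kq^kd^{-k}$ when $c=-1$, this product tends to $q^{k^2+k}$. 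On the right, both $(q/cd;q^2)_k$ and $(q^2/d;q^2)_k$ tend to $1$, so all $d$-dependence there disappears and the $d$-independent prefactor $[n]q^{(1-n)/2}\{1+[n]^2(n^2-1)(1-q)^2/24\}$ is untouched.

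It then remains to simplify the resulting constant factorials using $(q;q^2)_k(-q;q^2)_k=(q^2;q^4)_k$ and $(q^2;q^2)_k(-q^2;q^2)_k=(q^4;q^4)_k$. The left-hand summand becomes $[4k+1]\frac{(q;q^2)_k^3(q^2;q^4)_k}{(q^2;q^2)_k^3(q^4;q^4)_k}q^{k^2+k}$, while the right-hand summand, after writing $(q^2;q^2)_k^2(-q^2;q^2)_k=(q^2;q^2)_k(q^4;q^4)_k$, becomes $\frac{(q;q^2)_k^3}{(q^2;q^2)_k(q^4;q^4)_k}q^{2k}$. These are exactly the two sides of Corollary~\ref{corl-f}.

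The one point requiring genuine care — and the only place an error could hide — is the legitimacy of passing to the limit $d\to\infty$ inside a congruence modulo $[n]\Phi_n(q)^3$. I would argue as in the coprimality discussion following \eqref{eq:wei}: write Theorem~\ref{thm-a} as $A(c,d,q)-B(c,d,q)=[n]\Phi_n(q)^3\,C(c,d,q)$, where the denominators appearing in $A,B,C$ are coprime to $\Phi_n(q)$. At $c=-1$ the new denominator factors $(-q^2;q^2)_k=\prod_{j=1}^{k}(1+q^{2j})$ have only roots of even order and hence share no root with $\Phi_n(q)$ because $n$ is odd; this is precisely where the oddness of $n$ enters. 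Since $A$ and $B$ converge as $d\to\infty$ (each to a finite Laurent polynomial in $q$), $C$ converges as well, and the limiting identity $A_\infty-B_\infty=[n]\Phi_n(q)^3C_\infty$ delivers the claimed $q$-supercongruence.
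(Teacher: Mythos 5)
Your proposal is correct and takes exactly the paper's route: the paper obtains Corollary~\ref{corl-f} precisely by setting $c=-1$ and letting $d\to\infty$ in Theorem~\ref{thm-a}, and your limit computations together with the product identities $(q;q^2)_k(-q;q^2)_k=(q^2;q^4)_k$ and $(q^2;q^2)_k(-q^2;q^2)_k=(q^4;q^4)_k$ supply the (unstated) details behind that one-line derivation. Your additional justification of passing to the limit $d\to\infty$ inside the congruence—resting on the fact that all denominators involved stay coprime to $\Phi_n(q)$ for odd $n$—goes beyond what the paper records and does no harm.
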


Letting $c=-1,d\to0$ in Theorem \ref{thm-a}, we get the third new
$q$-analogue of \eqref{long-a}.

\begin{cor}\label{corl-g}
Let $n$ be a positive odd integer. Then
\begin{align*}
\sum_{k=0}^{M}[4k+1]\frac{(q;q^2)_k^3(q^2;q^4)_k}{(q^2;q^2)_k^3(q^4;q^4)_k}q^{-k^2}
&\equiv[n]q^{(1-n)/2}\bigg\{1+[n]^2\frac{(n^2-1)(1-q)^2}{24}\bigg\}\\[5pt]
&\quad\times\sum_{k=0}^{(n-1)/2}\frac{(q;q^2)_k^3}{(q^2;q^2)_k(q^4;q^4)_k}(-q)^{k}\pmod{[n]\Phi_n(q)^3}.
\end{align*}
\end{cor}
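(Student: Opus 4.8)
The plan is to obtain Corollary \ref{corl-g} as a limiting specialization of Theorem \ref{thm-a}, taking $c=-1$ and letting $d\to0$ in \eqref{eq:wei}. Both sides of \eqref{eq:wei} are rational functions of $c$, $d$ and $q$, so once I have checked that the denominators surviving the limit stay coprime to $\Phi_n(q)$, the congruence modulo $[n]\Phi_n(q)^3$ passes to the limit term by term (this is the routine fact the author already invokes for the case $c\to1$, $d\to-1$). The substance of the argument is then the algebraic simplification of each side together with a careful evaluation of the $d\to0$ limit.

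First I would set $c=-1$ and record the two elementary factorizations $(q;q^2)_k(-q;q^2)_k=(q^2;q^4)_k$ and $(q^2;q^2)_k(-q^2;q^2)_k=(q^4;q^4)_k$, both instances of $(a;q)_k(-a;q)_k=(a^2;q^2)_k$. On the left-hand side these turn the factor $\frac{(q;q^2)_k^4(-q;q^2)_k}{(q^2;q^2)_k^4(-q^2;q^2)_k}$ into $\frac{(q;q^2)_k^3(q^2;q^4)_k}{(q^2;q^2)_k^3(q^4;q^4)_k}$, exactly the factor in the statement; on the right-hand side the same two identities collapse $\frac{1}{(q^2;q^2)_k^2(-q^2;q^2)_k}$ into $\frac{1}{(q^2;q^2)_k(q^4;q^4)_k}$. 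The common prefactor $[n]q^{(1-n)/2}\{1+[n]^2(n^2-1)(1-q)^2/24\}$ involves neither $c$ nor $d$ and is therefore untouched.

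The delicate step, and the one I expect to require the most care, is the limit $d\to0$, since the explicit power $(q/(cd))^k$ on the left and the factors $(q/cd;q^2)_k$, $(q^2/d;q^2)_k$ on the right all blow up as $d\to0$. The key point is that the singularities cancel. Writing $(q^2/d;q^2)_k=d^{-k}\prod_{j=0}^{k-1}(d-q^{2j+2})\sim d^{-k}(-1)^kq^{k^2+k}$ while $(dq;q^2)_k\to1$, I get
\[
\lim_{d\to0}\frac{(dq;q^2)_k}{(q^2/d;q^2)_k}\Big(\frac{q}{-d}\Big)^k=q^{-k^2},
\]
which is precisely the factor $q^{-k^2}$ on the left. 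On the right, $(-q/d;q^2)_k=d^{-k}\prod_{j=0}^{k-1}(d+q^{2j+1})\sim d^{-k}q^{k^2}$, so
\[
\lim_{d\to0}\frac{(q/cd;q^2)_k}{(q^2/d;q^2)_k}\Big|_{c=-1}=(-1)^kq^{-k};
\]
multiplying by the surviving $q^{2k}$ converts the summand into $\frac{(q;q^2)_k^3}{(q^2;q^2)_k(q^4;q^4)_k}(-q)^k$, matching the statement. In both computations the $d^{-k}$ from the explicit prefactor exactly offsets the $d^{-k}$ growth of $(q^2/d;q^2)_k$, which is what makes the limits finite.

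Finally I would verify that the limit is legitimate inside the congruence. The denominators of the two limiting expressions are $(q^2;q^2)_k$, $(q^4;q^4)_k$ and powers of $q$; since $\Phi_n(q)\mid 1-q^{m}$ forces $n\mid m$, none of the factors $1-q^{2j}$, $1-q^{4j}$ with $1\le j\le n-1$ (and no power of $q$) is divisible by $\Phi_n(q)$ for odd $n$, so no factor of $\Phi_n(q)$ is cancelled and the congruence modulo $[n]\Phi_n(q)^3$ is preserved under $c=-1$, $d\to0$. Assembling the simplified left- and right-hand sides with the unchanged prefactor then yields Corollary \ref{corl-g}; apart from the limit evaluation above, every step is routine.
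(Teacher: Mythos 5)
Your proposal is correct and follows exactly the paper's route: the paper obtains Corollary \ref{corl-g} by setting $c=-1$ and letting $d\to0$ in Theorem \ref{thm-a}, treating the coprimality of the surviving denominators to $\Phi_n(q)$ as routine, just as you do. Your explicit limit evaluations ($\lim_{d\to0}(dq;q^2)_k(q/(-d))^k/(q^2/d;q^2)_k=q^{-k^2}$ on the left and $\lim_{d\to0}(-q/d;q^2)_k/(q^2/d;q^2)_k=(-1)^kq^{-k}$ on the right) are accurate and merely spell out what the paper leaves implicit.
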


Letting $c\to \infty,d\to\infty$ in Theorem \ref{thm-a}, we are led to
the fourth new $q$-analogue of \eqref{long-a}.

\begin{cor}\label{corl-h}
Let $n$ be a positive odd integer. Then
\begin{align*}
\sum_{k=0}^{M}[4k+1]\frac{(q;q^2)_k^4}{(q^2;q^2)_k^4}q^{2k^2+k}
&\equiv[n]q^{(1-n)/2}\bigg\{1+[n]^2\frac{(n^2-1)(1-q)^2}{24}\bigg\}\\[5pt]
&\quad\times\sum_{k=0}^{(n-1)/2}\frac{(q;q^2)_k^3}{(q^2;q^2)_k^2}q^{2k}\pmod{[n]\Phi_n(q)^3}.
\end{align*}
\end{cor}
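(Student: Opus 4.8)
The plan is to obtain Corollary~\ref{corl-h} as the limiting case $c\to\infty$, $d\to\infty$ of Theorem~\ref{thm-a}; no new identity is required, only a careful passage to the limit. The crucial structural observation is that the modulus $[n]\Phi_n(q)^3$ on the right-hand side of \eqref{eq:wei} contains neither $c$ nor $d$. Hence, for each fixed odd $n$, writing the difference of the two sides of \eqref{eq:wei} as $[n]\Phi_n(q)^3\,R(c,d,q)$ with $R$ a rational function whose denominator is coprime to $[n]\Phi_n(q)$, it suffices to show that both sides possess finite limits as $c,d\to\infty$: then $R$ inherits a finite limit and the congruence persists. The only factors carrying $c$ or $d$ are shifted factorials of the shape $(q^2/c;q^2)_k$, $(q^2/d;q^2)_k$, $(q/cd;q^2)_k$ in denominators (all tending to $1$) and $(cq;q^2)_k$, $(dq;q^2)_k$ in numerators, so the limits will exist once the growth in $c,d$ is balanced against the explicit monomial $(q/cd)^k$.

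First I would analyse the summand on the left of \eqref{eq:wei}. Since $(cq;q^2)_k=\prod_{j=0}^{k-1}(1-cq^{2j+1})$ has leading coefficient $\prod_{j=0}^{k-1}(-q^{2j+1})=(-1)^kq^{k^2}$ in $c$, one has $\lim_{c\to\infty}c^{-k}(cq;q^2)_k=(-1)^kq^{k^2}$, while $\lim_{c\to\infty}(q^2/c;q^2)_k=1$; the same holds with $d$ in place of $c$. Feeding these into the factor $q^k/(cd)^k$ gives
\[
\lim_{c,d\to\infty}\frac{(cq,dq;q^2)_k}{(q^2/c,q^2/d;q^2)_k}\bigg(\frac{q}{cd}\bigg)^k
=(-1)^kq^{k^2}\,(-1)^kq^{k^2}\,q^k=q^{2k^2+k}.
\]
Consequently the left-hand side of \eqref{eq:wei} tends to $\sum_{k=0}^{M}[4k+1]\frac{(q;q^2)_k^4}{(q^2;q^2)_k^4}q^{2k^2+k}$, which is the left-hand side claimed in Corollary~\ref{corl-h}.

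Next I would treat the right-hand side. The bracketed prefactor $[n]q^{(1-n)/2}\{1+[n]^2(n^2-1)(1-q)^2/24\}$ involves no $c$ or $d$, so only the sum needs attention. From $\lim_{c,d\to\infty}(q/cd;q^2)_k=1$ and $\lim_{c,d\to\infty}(q^2/c,q^2/d;q^2)_k=1$ the $c,d$-dependent quotient in each summand tends to $1$, whence the sum converges to $\sum_{k=0}^{(n-1)/2}\frac{(q;q^2)_k^3}{(q^2;q^2)_k^2}q^{2k}$. Combining the two limits reproduces exactly the congruence asserted in Corollary~\ref{corl-h}.

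Because the underlying computation is a routine extraction of leading behaviour in $c$ and $d$, I do not anticipate a genuine obstacle. The one point that genuinely requires justification is the interchange of limit and congruence flagged in the first paragraph: this is legitimate precisely because the modulus $[n]\Phi_n(q)^3$ is free of $c,d$ and every $c,d$-bearing denominator tends to the unit $1$, so no spurious pole at $\Phi_n(q)=0$ can be introduced in the limit.
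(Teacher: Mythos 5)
Your proposal is correct and is essentially the paper's own proof: the paper obtains Corollary \ref{corl-h} precisely by letting $c\to\infty$, $d\to\infty$ in Theorem \ref{thm-a}, and your computations $(cq,dq;q^2)_k(q/cd)^k\to q^{2k^2+k}$ with all $c,d$-dependent denominators tending to $1$ are exactly the routine verifications the paper leaves implicit. The limit-versus-congruence justification you flag is likewise handled in the paper only by its blanket remark (after Lemma \ref{lem-zero}) that such denominator factors stay coprime to $\Phi_n(q)$ under the relevant specializations, so your extra care there is a welcome but not divergent addition.
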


Letting $c\to0,d\to0$ in Theorem \ref{thm-a}, we arrive at the fifth new
$q$-analogue of \eqref{long-a}.

\begin{cor}\label{corl-i}
Let $n$ be a positive odd integer. Then
\begin{align*}
\sum_{k=0}^{M}[4k+1]\frac{(q;q^2)_k^4}{(q^2;q^2)_k^4}q^{-2k^2-k}
&\equiv[n]q^{(1-n)/2}\bigg\{1+[n]^2\frac{(n^2-1)(1-q)^2}{24}\bigg\}\\[5pt]
&\quad\times\sum_{k=0}^{(n-1)/2}\frac{(q;q^2)_k^3}{(q^2;q^2)_k^2}(-q)^{-k^2}\pmod{[n]\Phi_n(q)^3}.
\end{align*}
\end{cor}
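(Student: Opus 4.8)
The plan is to obtain Corollary \ref{corl-i} as the special case $c\to0$, $d\to0$ of Theorem \ref{thm-a}; no new transformation is required, only an asymptotic analysis of the parameter-dependent $q$-shifted factorials together with a check that the limit may be carried through the congruence. The key preliminary observation is that, although the factors $(q^2/c;q^2)_k$ and $(q^2/d;q^2)_k$ sitting in the denominators blow up as $c,d\to0$, the accompanying factor $(q/cd)^k=q^k(cd)^{-k}$ on the left (and $(q/cd;q^2)_k$ on the right) cancels these poles, so that each summand on both sides of \eqref{eq:wei} is in fact a rational function of $c,d$ that is regular at the origin.

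Concretely, I would use that $(cq,dq;q^2)_k\to1$ while
\begin{align*}
(q^2/c;q^2)_k\sim(-1)^kc^{-k}q^{k^2+k},\quad (q^2/d;q^2)_k\sim(-1)^kd^{-k}q^{k^2+k},\quad (q/cd;q^2)_k\sim(-1)^k(cd)^{-k}q^{k^2}
\end{align*}
as $c,d\to0$, each obtained by tracking the dominant term $-q^{\,\cdot}/c$ or $-q^{\,\cdot}/(cd)$ in every factor of the product. Substituting these into the left-hand summand of \eqref{eq:wei}, the power $(cd)^{-k}q^{2k^2+2k}$ coming from $(q^2/c,q^2/d;q^2)_k$ cancels against $(q/cd)^k$, leaving the finite limit $[4k+1]\frac{(q;q^2)_k^4}{(q^2;q^2)_k^4}q^{-2k^2-k}$. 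On the right, the poles of $(q/cd;q^2)_k$ and of $(q^2/c,q^2/d;q^2)_k$ cancel and, after collecting the powers of $q$ and using $k^2\equiv k\pmod 2$, the summand tends to $\frac{(q;q^2)_k^3}{(q^2;q^2)_k^2}(-q)^{-k^2}$. The prefactor carries no $c,d$ and is simply rewritten as $[n]q^{(1-n)/2}\{1+[n]^2(n^2-1)(1-q)^2/24\}$. These three limits reproduce exactly the two sides of Corollary \ref{corl-i}.

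The step I expect to be the main obstacle is the justification that the congruence modulo $[n]\Phi_n(q)^3$ survives the limit. Since the modulus $[n]\Phi_n(q)^3$ is independent of $c$ and $d$, writing the difference of the two sides of \eqref{eq:wei} as $[n]\Phi_n(q)^3\,C(c,d,q)$, it suffices to know that $C$ is regular at $c=d=0$ with denominator coprime to $\Phi_n(q)$ there. This follows from the regularity of both sides at the origin established above, together with the fact that the limiting summands have denominators built only from powers of $(q^2;q^2)_k$ with $k\le M\le n-1$, each of which is coprime to $\Phi_n(q)$ because $n$ is odd and $n\nmid 2j$ for $1\le j\le n-1$; this is the analogue, for the specialization $c,d\to0$, of the coprimality remark the paper makes for $c\to1,d\to-1$. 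Granting this, specializing at $c=d=0$ yields the desired $q$-supercongruence.
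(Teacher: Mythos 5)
Your proposal is correct and is essentially the paper's own proof: the paper obtains Corollary \ref{corl-i} precisely by letting $c\to0$, $d\to0$ in Theorem \ref{thm-a}, with the coprimality-of-denominators discussion stated once (for the case $c\to1$, $d\to-1$) and declared to cover the later specializations. Your asymptotics for the $q$-shifted factorials, the resulting cancellation of the poles against $(q/cd)^k$, and the sign manipulation $(-1)^kq^{-k^2}=(-q)^{-k^2}$ simply make explicit the limit computation that the paper leaves to the reader.
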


The case $c=d=q^{-2}$ of Theorem \ref{thm-a} yields the following
result.

\begin{cor}\label{corl-m}
Let $n$ be a positive odd integer. Then
\begin{align*}
\sum_{k=0}^{M}[4k+1]\frac{(q;q^2)_k^4(q^{-1};q^2)_k^2}{(q^2;q^2)_k^4(q^4;q^2)_k^2}q^{5k}
&\equiv[n]q^{(1-n)/2}\bigg\{1+[n]^2\frac{(n^2-1)(1-q)^2}{24}\bigg\}\\[5pt]
&\quad\times\sum_{k=0}^{(n-1)/2}\frac{(q;q^2)_k^3(q^5;q^2)_k}{(q^2;q^2)_k^2(q^4;q^2)_k^2}q^{2k}\pmod{[n]\Phi_n(q)^3}.
\end{align*}
\end{cor}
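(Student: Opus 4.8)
The plan is to obtain Corollary~\ref{corl-m} as the single specialization $c=d=q^{-2}$ of Theorem~\ref{thm-a}, so that the only work is the bookkeeping of the $q$-shifted factorials together with a coprimality check guaranteeing that the reduction modulo $[n]\Phi_n(q)^3$ survives the substitution. First I would put $c=d=q^{-2}$ into \eqref{eq:wei} and record the effect on each factor. Since $cq=dq=q^{-1}$, the numerator pair becomes $(cq,dq;q^2)_k=(q^{-1};q^2)_k^2$; since $q^2/c=q^2/d=q^4$, the denominator pair becomes $(q^2/c,q^2/d;q^2)_k=(q^4;q^2)_k^2$; and since $cd=q^{-4}$ we have $q/(cd)=q^5$, so the weight is $(q/(cd))^k=q^{5k}$ and the factor $(q/cd;q^2)_k$ on the right becomes $(q^5;q^2)_k$. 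Substituting these into the left- and right-hand sides of \eqref{eq:wei} produces termwise exactly the two sums displayed in Corollary~\ref{corl-m}, while the prefactor $\{[n]q^{(1-n)/2}+[n]^3q^{(1-n)/2}(n^2-1)(1-q)^2/24\}=[n]q^{(1-n)/2}\{1+[n]^2(n^2-1)(1-q)^2/24\}$ is carried over verbatim.

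The point that must be checked before the specialization is legitimate is that putting $c=d=q^{-2}$ does not create a factor of $\Phi_n(q)$ in a denominator, which could otherwise break the congruence modulo $[n]\Phi_n(q)^3$. The relevant denominator factor is $(q^4;q^2)_M^2$, whose constituents are $1-q^{2j}$ with $2\le j\le M+1$; recall that $\Phi_n(q)\mid 1-q^{2j}$ precisely when $n\mid 2j$, i.e.\ (as $n$ is odd) when $n\mid j$. For $M=(n-1)/2$ the index $j$ never exceeds $(n+1)/2<n$, so no such factor occurs, $(q^4;q^2)_{(n-1)/2}$ is coprime to $\Phi_n(q)$, and the substitution is immediate. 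Likewise the right-hand denominators $(q^2;q^2)_k^2(q^4;q^2)_k^2$ with $0\le k\le(n-1)/2$ are coprime to $\Phi_n(q)$.

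The main obstacle is the remaining truncation $M=n-1$, for which $(q^4;q^2)_{n-1}$ picks up the factor $1-q^{2n}$ and so is divisible by $\Phi_n(q)$. Here I would show that every summand on the left with $(n+1)/2\le k\le n-1$ is already $\equiv0$, so that the value of the left-hand side is unchanged by extending the summation range and the $M=n-1$ case reduces to the settled $M=(n-1)/2$ case. The mechanism is a $\Phi_n(q)$-valuation count: for such $k$ the numerator factor $(q;q^2)_k^4$ contributes valuation $4$ (the single factor $1-q^n$ appears in $(q;q^2)_k$), the extra factor $(q^{-1};q^2)_k^2$ only increases the valuation, whereas the denominator $(q^2;q^2)_k^4(q^4;q^2)_k^2$ contributes valuation at most $2$, and only at $k=n-1$ through $(1-q^{2n})^2$. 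Hence each tail term is a well-defined expression divisible by $\Phi_n(q)^4$, and the complementary divisibility by $[n]/\Phi_n(q)=\prod_{1<d\mid n,\,d<n}\Phi_d(q)$ follows by the same routine valuation argument applied to each $\Phi_d(q)$. This yields the vanishing of the tail and completes the proof.
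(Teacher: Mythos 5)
Your specialization $c=d=q^{-2}$ and the factor bookkeeping coincide exactly with the paper's own (one-line) proof, and for the truncation $M=(n-1)/2$ your coprimality check is correct and complete; you are also right that $M=n-1$ is where the danger lies, since $(q^4;q^2)_{n-1}$ contains $1-q^{2n}$ (the paper glosses over this with its blanket coprimality remark, which is literally false for this specialization). However, your repair of the $M=n-1$ case has a genuine gap in the valuation count. From ``numerator valuation at least $4$'' and ``denominator valuation at most $2$'' one can only conclude divisibility by $\Phi_n(q)^{2}$, not $\Phi_n(q)^4$; to reach $4$ at $k=n-1$ you need $(q^{-1};q^2)_{n-1}^2$ to contribute exactly $2$, which requires $2(n-1)-3\ge n$, i.e.\ $n\ge 5$. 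For $n=3$ the single tail term ($k=2$) is
$[9](q;q^2)_2^4(q^{-1};q^2)_2^2q^{10}/\{(q^2;q^2)_2^4(q^4;q^2)_2^2\}
=\Phi_3(q)^3\Phi_9(q)q^{8}/\{(1+q)^{10}(1+q^2)^6(1+q^3)^2\}$,
whose $\Phi_3(q)$-valuation is exactly $3$. Since $[3]\Phi_3(q)^3=\Phi_3(q)^4$, the two truncations $M=1$ and $M=2$ then differ by a quantity that is \emph{not} $\equiv 0$ modulo the modulus, so at most one of the two congruences asserted in the corollary can hold when $n=3$ (direct computation shows it is the $M=(n-1)/2$ one); your reduction of $M=n-1$ to $M=(n-1)/2$ therefore cannot work, and the $M=n-1$ assertion itself fails at $n=3$.

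Second, and independently, your claim that divisibility by $[n]/\Phi_n(q)=\prod_{1<d\mid n,\,d<n}\Phi_d(q)$ ``follows by the same routine valuation argument applied to each $\Phi_d(q)$'' is false: individual terms are in general not divisible by $\Phi_d(q)$ at all. Take $n=9$, $d=3$, $k=6$: the numerator factors $(q;q^2)_6^4$ (exponents $1,3,\dots,11$) and $(q^{-1};q^2)_6^2$ (exponents $-1,1,\dots,9$) give $\Phi_3$-valuation $8+4=12$ and $[25]$ gives $0$, while the denominator factors $(q^2;q^2)_6^4$ (exponents $2,4,\dots,12$) and $(q^4;q^2)_6^2$ (exponents $4,6,\dots,14$) also give $8+4=12$, so the net valuation is $0$. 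Divisibility by $[n]$ is not a term-by-term phenomenon: in the paper it comes from Lemma~\ref{lem-c}, a root-of-unity argument applied to the \emph{complete} sums $\sum_{k=0}^{n-1}$ and $\sum_{k=0}^{(n-1)/2}$, and even that lemma does not specialize painlessly to $c=d=q^{-2}$, because the denominators $(q^4;q^2)_k$ vanish at primitive $m$-th roots of unity once $k\ge m-1$. So your tail-vanishing strategy cannot be completed as written; what your argument genuinely establishes is the $M=(n-1)/2$ case, which is also all that the paper's own substitution argument covers.
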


Choosing $M=(p^r-1)/2$ and letting $q\to 1$ in the above
$q$-supercongruence, we obtain
\begin{align*}
\sum_{k=0}^{(p^r-1)/2}(4k+1)\frac{(\frac{1}{2})_k^4(-\frac{1}{2})_k^2}{k!^4(k+1)!^2}
\equiv
p^r\sum_{k=0}^{(p^r-1)/2}\frac{(\frac{1}{2})_k^3(\frac{5}{2})_k}{k!^2(k+1)!^2}\pmod{p^{r+3}}\quad\text{for}\
p>3.
\end{align*}

The case $c=q^{-2},d=1$ of Theorem \ref{thm-a} is the following
result.

\begin{cor}\label{corl-n}
Let $n$ be a positive odd integer. Then
\begin{align*}
\sum_{k=0}^{M}[4k+1]\frac{(q;q^2)_k^5(q^{-1};q^2)_k}{(q^2;q^2)_k^5(q^4;q^2)_k}q^{3k}
&\equiv[n]q^{(1-n)/2}\bigg\{1+[n]^2\frac{(n^2-1)(1-q)^2}{24}\bigg\}\\[5pt]
&\quad\times\sum_{k=0}^{(n-1)/2}\frac{(q;q^2)_k^3(q^3;q^2)_k}{(q^2;q^2)_k^3(q^4;q^2)_k}q^{2k}\pmod{[n]\Phi_n(q)^3}.
\end{align*}
\end{cor}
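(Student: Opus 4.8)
The plan is to obtain Corollary \ref{corl-n} as the pure specialization $c=q^{-2}$, $d=1$ of Theorem \ref{thm-a}, so that almost all of the work is bookkeeping on $q$-shifted factorials. First I would substitute these values into the left-hand side of \eqref{eq:wei}: the numerator $(q;q^2)_k^4(cq,dq;q^2)_k$ collapses to $(q;q^2)_k^4(q^{-1};q^2)_k(q;q^2)_k=(q;q^2)_k^5(q^{-1};q^2)_k$, the denominator $(q^2;q^2)_k^4(q^2/c,q^2/d;q^2)_k$ becomes $(q^2;q^2)_k^4(q^4;q^2)_k(q^2;q^2)_k=(q^2;q^2)_k^5(q^4;q^2)_k$, and the weight $(q/cd)^k$ becomes $q^{3k}$. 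On the right-hand side the summand numerator $(q;q^2)_k^3(q/cd;q^2)_k$ becomes $(q;q^2)_k^3(q^3;q^2)_k$ and its denominator $(q^2;q^2)_k^2(q^2/c,q^2/d;q^2)_k$ becomes $(q^2;q^2)_k^3(q^4;q^2)_k$. The bracketed prefactor is untouched, and factoring out $[n]q^{(1-n)/2}$ rewrites $[n]q^{(1-n)/2}+[n]^3q^{(1-n)/2}(n^2-1)(1-q)^2/24$ as $[n]q^{(1-n)/2}\{1+[n]^2(n^2-1)(1-q)^2/24\}$, which is exactly the form displayed in the corollary. Assembling these simplifications reproduces the claimed congruence.

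The only genuinely non-mechanical point is to confirm that the specialization does not collide with $\Phi_n(q)$, i.e.\ that the denominators appearing after setting $c=q^{-2}$, $d=1$ remain prime to $\Phi_n(q)$ so that the congruence modulo $[n]\Phi_n(q)^3$ is meaningful. The factor $(q^2/d;q^2)_M=(q^2;q^2)_M$ is prime to $\Phi_n(q)$ because $n$ is odd, whence $n\nmid 2j$ for $1\le j\le M\le n-1$. For $(q^2/c;q^2)_M=(q^4;q^2)_M=\prod_{j=0}^{M-1}(1-q^{2j+4})$ the same conclusion holds when $M=(n-1)/2$, since then the exponents $2j+4$ run only up to $n+1<2n$ and, being even, none is a multiple of the odd number $n$. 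Thus for $M=(n-1)/2$ the corollary is immediate from Theorem \ref{thm-a}.

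The main (and essentially only) obstacle is the case $M=n-1$, where $(q^4;q^2)_{n-1}$ does contain the factor $1-q^{2n}$, which is divisible by $\Phi_n(q)$. I would deal with it by reducing the $M=n-1$ statement to the already-settled $M=(n-1)/2$ statement, i.e.\ by showing that the tail $\sum_{k=(n+1)/2}^{n-1}$ of the left-hand side is congruent to $0$ modulo $[n]\Phi_n(q)^3$. For each $k$ in this range the numerator carries $(q;q^2)_k^5$, in which the exponent $n$ occurs once, so it supplies five factors of $\Phi_n(q)$; this overwhelms the single factor of $\Phi_n(q)$ contributed to the denominator by $(q^4;q^2)_k$, leaving each tail term divisible by $\Phi_n(q)^4$. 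Combining this with the analogous (routine) count for the proper cyclotomic factors $\Phi_d(q)$ with $d\mid n$ and $1<d<n$ shows that the tail is divisible by $[n]\Phi_n(q)^3$. This is precisely the ``similar discussion'' that the paper declares routine and omits, and no deeper difficulty arises.
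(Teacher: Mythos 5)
Your route is the paper's route: the paper disposes of Corollary \ref{corl-n} with the single remark that it is the case $c=q^{-2}$, $d=1$ of Theorem \ref{thm-a}, and your bookkeeping of the specialization (numerator $(q;q^2)_k^5(q^{-1};q^2)_k$, denominator $(q^2;q^2)_k^5(q^4;q^2)_k$, weight $q^{3k}$, and the two factors on the right-hand side) is correct. You are also right, and more careful than the paper, in flagging that the blanket ``denominators stay coprime to $\Phi_n(q)$'' justification genuinely breaks down when $M=n-1$: the specialized factor $(q^2/c;q^2)_{n-1}=(q^4;q^2)_{n-1}$ contains $1-q^{2n}$, which is divisible by $\Phi_n(q)$ (indeed by every $\Phi_d(q)$ with $d\mid n$, $d>1$), so the case $M=n-1$ needs an extra argument that the paper omits.

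The gap is in your repair of that case. Your term-by-term count does give $\Phi_n(q)^4$ for every tail term (five factors $1-q^n$ from $(q;q^2)_k^5$ against at most one $\Phi_n(q)$ from $1-q^{2n}$), and for $n$ prime this already finishes the proof since then $[n]\Phi_n(q)^3=\Phi_n(q)^4$. But the ``analogous (routine) count'' you invoke for the proper cyclotomic factors $\Phi_d(q)$, $d\mid n$, $1<d<n$, is false: individual tail terms need not be divisible by $\Phi_d(q)$ at all. Take $n=15$, $d=5$, $k=10$: the numerator $[41]\,(q;q^2)_{10}^5(q^{-1};q^2)_{10}$ has $\Phi_5$-order $5\cdot 2+2=12$ (the odd multiples of $5$ among the exponents $1,3,\dots,19$ and $-1,1,\dots,17$ are $5$ and $15$ in each case), while the denominator $(q^2;q^2)_{10}^5(q^4;q^2)_{10}$ also has $\Phi_5$-order $5\cdot 2+2=12$ (even multiples $10$ and $20$ in each case); the term has $\Phi_5$-valuation exactly $0$, and the same happens at $k=12$. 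So divisibility of the tail by the $[n]$-part of the modulus cannot be read off term by term; it requires cancellation across the sum. The tool for this is already in the paper: the root-of-unity argument of Lemmas \ref{lem-b} and \ref{lem-c}, which applies to the specialized summand (the degenerate terms tend to zero at the relevant roots of unity) and shows that both $\sum_{k=0}^{(n-1)/2}$ and $\sum_{k=0}^{n-1}$ of the left-hand side are $\equiv 0\pmod{[n]}$, hence so is their difference, your tail. With that substitution for the ``routine count,'' your argument closes; as written, that step fails.
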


Fixing $M=(p^r-1)/2$ and letting $q\to 1$ in the above
$q$-supercognruence, we get
\begin{align*}
\sum_{k=0}^{(p^r-1)/2}(4k+1)\frac{(\frac{1}{2})_k^5(-\frac{1}{2})_k}{k!^5(k+1)!}
\equiv
p^r\sum_{k=0}^{(p^r-1)/2}\frac{(\frac{1}{2})_k^3(\frac{3}{2})_k}{k!^3(k+1)!}\pmod{p^{r+3}}\quad\text{for}\
p>3.
\end{align*}

\section{A $q$-supergruence with four parematers modulo\\ $\Phi_n(q)(1-aq^n)(a-q^n)$}
 In this section, we shall deduce a $q$-supercongruence with four parameters modulo
 $\Phi_n(q)(1-aq^n)(a-q^n)$ in terms of the
 creative telescoping method. When the parameters are specified, it
 can produce \eqref{guo-b} and \eqref{guo-d}--\eqref{guo-f}.

 We first require the following lemma (see {\cite[Lemma 3.1]{GS}).

\begin{lem}\label{lem-a}
Let $n$ be a positive odd integer and $k$ an integer with $0\leq
k\leq(n-1)/2$. Then
\begin{align*}
\frac{(xq;q^2)_{(n-1)/2-k}}{(q^2/x;q^2)_{(n-1)/2-k}}\equiv(-x)^{(n-1)/2-2k}\frac{(xq;q^2)_{k}}{(q^2/x;q^2)_{k}}q^{(n-1)^2/4+k}\pmod{\Phi_n(q)}.
\end{align*}
\end{lem}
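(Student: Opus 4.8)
The plan is to work modulo $\Phi_n(q)$ and exploit the single structural fact that $\Phi_n(q)$ divides $q^n-1$, so that $q^n\equiv1$. Writing $m=(n-1)/2$, this reads $q^{2m+1}\equiv1\pmod{\Phi_n(q)}$, which is exactly what allows an index $j$ to be ``reflected'' into $m-j$ inside any power of $q$. Since $x$ is a free parameter, I would read every congruence as an identity in $\mathbb{Q}(x)[q]$ reduced modulo $\Phi_n(q)$ (equivalently, after sending $q$ to a primitive $n$-th root of unity, as an identity over the field $\mathbb{Q}(\zeta_n)(x)$, where the $q$-shifted factorials occurring in denominators are nonzero and hence invertible). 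I would record at the outset that $(q^2/x;q^2)_k$, $(xq;q^2)_k$ and $(q^2/x;q^2)_m$ stay invertible after reduction, so that the division step later is legitimate.

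First I would reflect the two elementary factors. For $0\le j\le m-1$, pulling out $-xq^{2j+1}$ and replacing $q^{-2j-1}$ by $q^{2m-2j}$ (legal since $q^{2m+1}\equiv1$) gives
\[
1-xq^{2j+1}\equiv -xq^{2j+1}\bigl(1-q^{2(m-j)}/x\bigr)\pmod{\Phi_n(q)},
\]
and in the same way
\[
1-q^{2j+2}/x\equiv -\tfrac{q^{2j+2}}{x}\bigl(1-xq^{2(m-j-1)+1}\bigr)\pmod{\Phi_n(q)}.
\]
Taking products over $0\le j\le m-k-1$, using $\sum_{j=0}^{N-1}(2j+1)=N^2$ to collect the monomial prefactors, and re-indexing (by $i=m-j$ in the first product and $i=m-1-j$ in the second) so that the reflected factors reassemble into genuine $q$-shifted factorials, I expect to arrive at
\[
(xq;q^2)_{m-k}\equiv(-x)^{m-k}q^{(m-k)^2}\frac{(q^2/x;q^2)_m}{(q^2/x;q^2)_k}
\]
and
\[
(q^2/x;q^2)_{m-k}\equiv(-1)^{m-k}x^{-(m-k)}q^{(m-k)^2+(m-k)}\frac{(xq;q^2)_m}{(xq;q^2)_k},
\]
both modulo $\Phi_n(q)$.

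Dividing the two displays, the $(-1)^{m-k}$ cancel, the $x$-powers combine to $x^{2(m-k)}$, the $q$-powers combine to $q^{-(m-k)}$, and there remains the cross term $(q^2/x;q^2)_m/(xq;q^2)_m$. To eliminate it I would specialize the first display to $k=0$, namely $(xq;q^2)_m\equiv(-x)^mq^{m^2}(q^2/x;q^2)_m$, whence $(q^2/x;q^2)_m/(xq;q^2)_m\equiv(-x)^{-m}q^{-m^2}$. Substituting this back, the sign-and-$x$ part collapses to $(-1)^mx^{m-2k}=(-x)^{m-2k}$, while the $q$-part becomes $q^{k-m-m^2}$. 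A final use of $q^n\equiv1$ in the guise $q^{mn}=q^{m(2m+1)}\equiv1$ promotes $q^{k-m-m^2}$ to $q^{m^2+k}$, and since $m^2=(n-1)^2/4$ this is precisely the asserted congruence.

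The main obstacle is bookkeeping rather than ideas: one must carry the monomial prefactors correctly through the reflection and the two re-indexings, in particular distinguishing the two nearly identical $q$-exponent sums $(m-k)^2$ and $(m-k)^2+(m-k)$, and then reduce the accumulated $q$-exponent modulo $n$ to land exactly on the representative $q^{m^2+k}$ rather than on a shifted one. Tracking the sign $(-1)^{m}$ and confirming $(-1)^m x^{m-2k}=(-x)^{m-2k}$ is the one place where a careless power of $-1$ could slip in, so I would verify that identity explicitly.
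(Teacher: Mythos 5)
Your proof is correct: the two reflection identities, the exponent sums $(m-k)^2$ and $(m-k)^2+(m-k)$, the sign bookkeeping $(-1)^m x^{m-2k}=(-x)^{m-2k}$, and the final reduction $q^{k-m-m^2}\equiv q^{m^2+k}$ via $q^{mn}=q^{m(2m+1)}\equiv 1 \pmod{\Phi_n(q)}$ all check out, and the invertibility of the denominators at $q=\zeta_n$ is properly addressed. Note that the paper itself does not prove this lemma but simply cites it from Guo and Schlosser \cite[Lemma 3.1]{GS}, and your argument is essentially the same reflection-modulo-$\Phi_n(q)$ computation used in that source, so you have in effect reconstructed the cited proof.
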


We also need two more lemmas.
\begin{lem}\label{lem-b}
Let $n$ be a positive odd integer. Then
\begin{align} \label{eq:wei-b}
\sum_{k=0}^{M}[4k+1]\frac{(aq,q/a,q/b,cq,dq,q;q^2)_k}{(q^2/a,aq^2,bq^2,q^2/c,q^2/d,q^2;q^2)_k}\bigg(\frac{bq}{cd}\bigg)^k
\equiv0\pmod{\Phi_n(q)}.
\end{align}
\end{lem}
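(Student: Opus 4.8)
The plan is to establish \eqref{eq:wei-b} by an antisymmetric pairing of the summand under the reflection $k\mapsto (n-1)/2-k$, after first reducing both admissible values of $M$ to $M=(n-1)/2$. Write $T_k$ for the $k$-th term on the left-hand side of \eqref{eq:wei-b}.

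First I would dispose of the tail when $M=n-1$. The numerator of $T_k$ contains the single factor $(q;q^2)_k=\prod_{j=0}^{k-1}(1-q^{2j+1})$, which for $(n-1)/2<k\le n-1$ includes the term $1-q^n\equiv 0\pmod{\Phi_n(q)}$; meanwhile, for $k\le n-1$ and generic $a,b,c,d$, none of the denominator factors in $(q^2/a,aq^2,bq^2,q^2/c,q^2/d,q^2;q^2)_k$ is divisible by $\Phi_n(q)$ (for instance $(q^2;q^2)_k$ would require $n\mid 2j$ with $1\le j\le k$, impossible for odd $n$). Hence $T_k\equiv 0\pmod{\Phi_n(q)}$ for $(n-1)/2<k\le n-1$, so it suffices to prove $\sum_{k=0}^{(n-1)/2}T_k\equiv 0\pmod{\Phi_n(q)}$.

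The heart of the argument is the claim $T_{(n-1)/2-k}\equiv -T_k\pmod{\Phi_n(q)}$. To see this I would group the six shifted-factorial quotients in $T_k$ into the shape $\frac{(xq;q^2)_k}{(q^2/x;q^2)_k}$ with $x$ ranging over $\{a,1/a,1/b,c,d,1\}$, so that Lemma \ref{lem-a} applies to each. Replacing $k$ by $(n-1)/2-k$ and invoking the lemma six times produces the constant $\big(\prod_x(-x)\big)^{(n-1)/2-2k}=(cd/b)^{(n-1)/2-2k}$ together with the $q$-power $q^{6[(n-1)^2/4+k]}$. Combining these with the reflected prefactor, for which $q^n\equiv 1$ yields $[4((n-1)/2-k)+1]\equiv -q^{-4k-1}[4k+1]\pmod{\Phi_n(q)}$, and with the ratio $(bq/cd)^{(n-1)/2-2k}$ of the two geometric weights, the parameter factors telescope as $(cd/b)^{(n-1)/2-2k}(bq/cd)^{(n-1)/2-2k}=q^{(n-1)/2-2k}$. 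All $k$-dependence in the exponent of $q$ then cancels, leaving
\[
\frac{T_{(n-1)/2-k}}{T_k}\equiv -q^{E},\qquad E=-1+\frac{n-1}{2}+\frac{3(n-1)^2}{2}.
\]
Writing $n=2m+1$ gives $E=6m^2+m-1\equiv -3m+m-1=-(2m+1)\equiv 0\pmod n$ (using $6m^2\equiv -3m\pmod{2m+1}$), whence $q^E\equiv 1$ and the claim follows.

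Finally I would conclude by pairing. The map $k\mapsto (n-1)/2-k$ is an involution of $\{0,1,\dots,(n-1)/2\}$; each two-element orbit contributes $T_k+T_{(n-1)/2-k}\equiv 0$, while a fixed point can occur only at $k=(n-1)/4$ (when $n\equiv 1\pmod 4$), where $T_k\equiv -T_k$ forces $T_k\equiv 0$ since $2$ is invertible modulo $\Phi_n(q)$. Summing over all orbits then yields \eqref{eq:wei-b}. I expect the only delicate point to be the bookkeeping in the antisymmetry step: correctly matching each of the six quotients to the hypothesis of Lemma \ref{lem-a}, tracking the six copies of the $(-x)$ and $q$-power contributions together with the reflected prefactor and the ratio of weights, and verifying that the accumulated exponent $E$ collapses to $0$ modulo $n$.
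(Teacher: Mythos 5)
Your proof is correct and follows essentially the same route as the paper's: the reflection $k\mapsto(n-1)/2-k$ combined with Lemma \ref{lem-a} to get the antisymmetry $T_{(n-1)/2-k}\equiv-T_k\pmod{\Phi_n(q)}$, then pairing, then disposing of the tail terms for $M=n-1$ via the factor $1-q^n$ in $(q;q^2)_k$. The only cosmetic difference is at the central term $k=(n-1)/4$ (when $n\equiv1\pmod 4$): you deduce $T_k\equiv0$ from $T_k\equiv-T_k$, whereas the paper observes directly that this term carries the factor $[4k+1]=[n]$; both are valid.
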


\begin{proof}
Let $\alpha_q(k)$ denote the $k$-th term on the left-hand side of
\eqref{eq:wei-b}, i.e.,
\begin{align*}
\alpha_q(k)=[4k+1]\frac{(aq,q/a,q/b,cq,dq,q;q^2)_k}{(q^2/a,aq^2,bq^2,q^2/c,q^2/d,q^2;q^2)_k}\bigg(\frac{bq}{cd}\bigg)^k.
\end{align*}
According to Lemma \ref{lem-a}, we obtain
\begin{align*}
\alpha_{q}((n-1)/2-k)\equiv-\alpha_{q}(k)\pmod{\Phi_n(q)}.
\end{align*}
When $(n-1)/2$ is odd, it is not difficult to verify that
\begin{align}\label{equation-a}
\sum_{k=0}^{(n-1)/2}[4k+1]\frac{(aq,q/a,q/b,cq,dq,q;q^2)_k}{(q^2/a,aq^2,bq^2,q^2/c,q^2/d,q^2;q^2)_k}\bigg(\frac{bq}{cd}\bigg)^k
\equiv0\pmod{\Phi_n(q)}.
\end{align}
When $(n-1)/2$ is even, the central term $\alpha_{q}((n-1)/4)$ will
remain. Since it has the factor $[n]$, \eqref{equation-a} is also
true in this instance. If $n>(n-1)/2$, the factor $(1-q^n)$ appears
in the numerator of $\alpha_q(k)$. This indicates
\begin{align*}
\sum_{k=0}^{n-1}[4k+1]\frac{(aq,q/a,q/b,cq,dq,q;q^2)_k}{(q^2/a,aq^2,bq^2,q^2/c,q^2/d,q^2;q^2)_k}\bigg(\frac{bq}{cd}\bigg)^k
\equiv0\pmod{\Phi_n(q)},
\end{align*}
as desired.
\end{proof}

\begin{lem}\label{lem-c}
Let $n$ be a positive odd integer. Then
\begin{align*}
\sum_{k=0}^{M}[4k+1]\frac{(aq,q/a,q/b,cq,dq,q;q^2)_k}{(q^2/a,aq^2,bq^2,q^2/c,q^2/d,q^2;q^2)_k}\bigg(\frac{bq}{cd}\bigg)^k
\equiv0\pmod{[n]}.
\end{align*}
\end{lem}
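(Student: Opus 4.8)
The plan is to prove Lemma \ref{lem-c} by establishing the stronger statement that the full sum $\sum_{k=0}^{n-1}\alpha_q(k)\equiv 0 \pmod{[n]}$, and that the tail sum $\sum_{k=(n+1)/2}^{n-1}\alpha_q(k)$ is itself divisible by $[n]$, so that the truncation at $M=(n-1)/2$ gives the same residue. Since $[n]=\prod_{d\mid n,\,d>1}\Phi_d(q)$, it is natural to argue cyclotomic factor by cyclotomic factor: I would show the sum vanishes modulo $\Phi_d(q)$ for every divisor $d>1$ of $n$, and then conclude divisibility by $[n]$ from the pairwise coprimality of the $\Phi_d(q)$.

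First I would fix a divisor $d>1$ of $n$ and work modulo $\Phi_d(q)$. The idea is to reindex the summation into blocks of length $d$: writing $k=jd+i$ with $0\le i<d$, I would examine how the summand $\alpha_q(k)$ transforms under the shift $k\mapsto k+d$. Because $q^d\equiv 1\pmod{\Phi_d(q)}$ and the $q$-shifted factorials $(x;q^2)_k$ are built from factors $1-xq^{2\ell}$, each such factor is periodic mod $\Phi_d(q)$ with period related to $d$, which should let me relate consecutive blocks. The cleanest route is to mimic the antisymmetry argument already used in Lemma \ref{lem-b}: I expect to find a reflection $k\mapsto d-1-k$ (or an analogous involution within each residue block) under which $[4k+1]$ and the factorial ratio combine to negate the summand modulo $\Phi_d(q)$, forcing the block sum to vanish. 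The factor $[n]=(1-q^n)/(1-q)$ carries a zero at every primitive $d$-th root of unity for $d\mid n$, $d>1$, which is exactly what feeds the $\Phi_d(q)$-divisibility.

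Alternatively, and perhaps more in the spirit of the paper, I would lean on Lemma \ref{lem-a} and the pairing $\alpha_q((n-1)/2-k)\equiv-\alpha_q(k)$ already derived modulo $\Phi_n(q)$, but now seek the analogous pairing modulo each $\Phi_d(q)$. Since Lemma \ref{lem-a} is stated only modulo $\Phi_n(q)$, the main technical work is to upgrade that reflection identity to hold modulo $\Phi_d(q)$ for arbitrary $d\mid n$ with $d>1$; this requires replacing $(n-1)/2$ by the appropriate half-period $(d-1)/2$ in the local analysis and tracking that the half-integer exponents behave correctly when $d$ is odd (which it is, as a divisor of the odd integer $n$). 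Once the local antisymmetry is in hand for each $d$, the vanishing modulo $\Phi_d(q)$ follows just as in Lemma \ref{lem-b}, with the central term absorbing a factor of $[d]$ when $(d-1)/2$ is even.

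The hard part will be handling the truncation range and the central terms uniformly across all divisors $d$. Modulo $\Phi_n(q)$ the argument of Lemma \ref{lem-b} cleanly splits the sum at $(n-1)/2$ and disposes of the upper half via the factor $(1-q^n)$; but modulo a proper divisor $\Phi_d(q)$ with $d<n$, the factor $(1-q^n)$ no longer forces the tail to vanish term-by-term, so I must instead show that the block-pairing itself already cancels the contributions within the truncated range and that the leftover central or boundary terms each carry the requisite cyclotomic factor. Verifying that no uncancelled term survives — in particular at the block boundaries where the reindexing $k=jd+i$ meets the cutoff $M$ — is where the bookkeeping is most delicate, and I would treat the two cases $M=(n-1)/2$ and $M=n-1$ separately to be safe.
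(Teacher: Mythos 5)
Your overall reduction---prove vanishing modulo $\Phi_d(q)$ for every divisor $d>1$ of $n$, then invoke coprimality of the cyclotomic polynomials---is exactly the skeleton of the paper's proof, but your proposal leaves the per-divisor step as a hope rather than an argument, and the way you frame that step would not work as stated. Writing $\alpha_q(k)$ for the $k$-th term and fixing a divisor $d>1$ of $n$, the obstacle you correctly sense is real but has a specific resolution you never supply. For $k\geq d$ the term $\alpha_q(k)$ has \emph{both} numerator and denominator divisible by $\Phi_d(q)$: the numerator $(q;q^2)_k$ contains the factor $1-q^{d}$ once $k\geq (d+1)/2$, while the denominator $(q^2;q^2)_k$ contains $1-q^{2d}$ once $k\geq d$. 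Consequently, term-wise polynomial congruences modulo $\Phi_d(q)$---your ``reflection within each block'' and your proposed upgrade of Lemma \ref{lem-a} to modulus $\Phi_d(q)$---are ill-defined or fail for those $k$; periodicity of the individual factors does not transfer to the terms. The paper's device is to evaluate at a primitive $d$-th root of unity $\zeta$ (where every term is finite as a limit) and to use the multiplicativity relation
\begin{align*}
\frac{\alpha_{\zeta}(jd+k)}{\alpha_{\zeta}(jd)}=\lim_{q\to\zeta}\frac{\alpha_{q}(jd+k)}{\alpha_{q}(jd)}=\alpha_{\zeta}(k),
\end{align*}
which reduces every block to the first one. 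This is the missing idea in your proposal.

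With that relation in hand, no new reflection modulo $\Phi_d(q)$ is needed at all: Lemma \ref{lem-b} applied with $n$ replaced by $d$ already gives $\sum_{k=0}^{d-1}\alpha_{\zeta}(k)=\sum_{k=0}^{(d-1)/2}\alpha_{\zeta}(k)=0$, and both truncations then collapse. For $M=n-1$ the sum splits into $n/d$ complete blocks, each equal to $\alpha_{\zeta}(jd)\sum_{k=0}^{d-1}\alpha_{\zeta}(k)=0$. For $M=(n-1)/2$ it splits into $(n/d-1)/2$ complete blocks plus one final half-block starting at $(n-d)/2$, which is itself a multiple of $d$ (because $n/d$ is odd), so by multiplicativity it equals $\alpha_{\zeta}((n-d)/2)\sum_{k=0}^{(d-1)/2}\alpha_{\zeta}(k)=0$. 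In particular, the truncation issue you flag as ``the hard part'' is resolved not by showing that leftover central or boundary terms carry cyclotomic factors, but by the fact that the cutoff lands exactly on a half-block that vanishes wholesale. Since your write-up neither contains the root-of-unity evaluation nor any substitute for it, the per-divisor vanishing---and hence the lemma---is not established by your argument.
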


\begin{proof}
 For $n>1$, let $\zeta\neq1$ be an $n$-th root of unity, which is not
necessarily primitive. This means that $\zeta$ is a primitive root
of unity of odd degree $m|n$. Lemma \ref{lem-b} with $n=m$ implies
that
\begin{align*}
\sum_{k=0}^{m-1}\alpha_{\zeta}(k)=\sum_{k=0}^{(m-1)/2}\alpha_{\zeta}(k)=0.
\end{align*}
By means of the relation:
\begin{align*}
\frac{\alpha_{\zeta}(jm+k)}{\alpha_{\zeta}(jm)}=\lim_{q\to\zeta}\frac{\alpha_{q}(jm+k)}{\alpha_{q}(jm)}=\alpha_{\zeta}(k),
\end{align*}
we achieve
\begin{align*}
\sum_{k=0}^{n-1}\alpha_{\zeta}(k)=\sum_{j=0}^{n/m-1}\sum_{k=0}^{m-1}\alpha_{\zeta}(jm+k)
=\sum_{j=0}^{n/m-1}\alpha_{\zeta}(jm)\sum_{k=0}^{m-1}\alpha_{\zeta}(k)=0,
\end{align*}
\begin{align*}
\sum_{k=0}^{(n-1)/2}\alpha_{\zeta}(k)=
\sum_{j=0}^{(n/m-3)/2}\alpha_{\zeta}(jm)\sum_{k=0}^{m-1}\alpha_{\zeta}(k)+\sum_{k=0}^{(m-1)/2}\alpha_{\zeta}((n-m)/2+k)=0.
\end{align*}
The last two equations tell us that $\sum_{k=0}^{n-1}\alpha_{q}(k)$
and $\sum_{k=0}^{(n-1)/2}\alpha_{q}(k)$ are both divisible by the
cyclotomic polynomials $\Phi_m(q)$. Because this is correct for any
divisor $m>1$ of $n$, we conclude that they are divisible by
\begin{equation*}
\prod_{m|n,m>1}\Phi_m(q)=[n].
\end{equation*}
This completes the proof.
\end{proof}

We now state our main result in this section.
\begin{thm}\label{thm-d}
Let $n$ be a positive odd integer. Then
\begin{align}
&\sum_{k=0}^{M}[4k+1]\frac{(aq,q/a,q/b,cq,dq,q;q^2)_k}{(q^2/a,aq^2,bq^2,q^2/c,q^2/d,q^2;q^2)_k}\bigg(\frac{bq}{cd}\bigg)^k
\notag\\[5pt]\:
&\:\equiv[n](b/q)^{(n-1)/2}\frac{(q^2/b;q^2)_{(n-1)/2}}{(bq^2;q^2)_{(n-1)/2}}
\notag\\[5pt]
&\quad\times\sum_{k=0}^{(n-1)/2}\frac{(aq,q/a,q/b,q/cd;q^2)_k}{(q^2,q^2/b,q^2/c,q^2/d;q^2)_k}q^{2k}\pmod{\Phi_n(q)(1-aq^n)(a-q^n)}.  \label{eq:wei-d}
\end{align}
\end{thm}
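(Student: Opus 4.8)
The plan is to treat the difference of the two sides of \eqref{eq:wei-d} as a rational function of the parameter $a$ over the field $\mathbb{Q}(q,b,c,d)$ and to prove divisibility by each of the three pairwise coprime factors $\Phi_n(q)$, $1-aq^n$ and $a-q^n$ separately; the Chinese remainder theorem for coprime polynomials then yields divisibility by their product. The factors $1-aq^n$ and $a-q^n$ have the distinct roots $a=q^{-n}$ and $a=q^{n}$, which are coprime to $\Phi_n(q)$ (a polynomial free of $a$), so this decomposition is legitimate once one checks that the denominators occurring are prime to $\Phi_n(q)$ for generic $b,c,d$, exactly in the spirit of the relative-primality remarks made after Theorem \ref{thm-a}.

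The congruence modulo $\Phi_n(q)$ is the easy part. The left-hand side of \eqref{eq:wei-d} is literally the sum treated in Lemma \ref{lem-b}, hence it is $\equiv0\pmod{\Phi_n(q)}$. On the other side the prefactor carries $[n]=(1-q^n)/(1-q)$, which is divisible by $\Phi_n(q)$ for $n>1$; since the remaining $q$-shifted factorials in the prefactor and in the sum are prime to $\Phi_n(q)$ for generic $b,c,d$, the right-hand side is $\equiv0\pmod{\Phi_n(q)}$ as well. Thus both sides vanish modulo $\Phi_n(q)$ and this factor is accounted for, the case $n=1$ being trivial.

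The substantial step is the evaluation at $a=q^{-n}$. Both sides of \eqref{eq:wei-d} are invariant under $a\mapsto 1/a$ (which swaps $aq\leftrightarrow q/a$ and $aq^2\leftrightarrow q^2/a$ on the left and fixes the right), so establishing equality at $a=q^{-n}$ simultaneously settles $a=q^{n}$, i.e. both the $1-aq^n$ and the $a-q^n$ factors. At $a=q^{-n}$ the factor $(aq;q^2)_k=(q^{1-n};q^2)_k$ terminates the sum at $k=(n-1)/2$, so the value is common to both admissible choices of $M$, and the left-hand side becomes a terminating very-well-poised ${}_8\phi_7$ in base $q^2$ with $A=q$, the five well-poised pairs being $(aq,q^2/a),(q/a,aq^2),(q/b,bq^2),(cq,q^2/c),(dq,q^2/d)$, each of product $q^3=Aq$, and with argument $bq/(cd)$ matching the power $(bq/cd)^k$. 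Applying Watson's transformation \eqref{eq:watson} converts it into a terminating balanced ${}_4\phi_3$ times an explicit prefactor.

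It remains to identify this with the right-hand side at $a=q^{-n}$, whose ${}_4\phi_3$ is also terminating and balanced. The hard part will be this final matching: I would reverse the order of summation in one of the two balanced ${}_4\phi_3$'s (equivalently, invoke a Sears transformation) so as to produce the other, and then verify that the prefactor generated by Watson's transformation, combined with the one generated by the reversal, collapses to $[n](b/q)^{(n-1)/2}(q^2/b;q^2)_{(n-1)/2}/(bq^2;q^2)_{(n-1)/2}$. This is a finite but delicate bookkeeping of $q$-shifted factorials, using $q^{1-n}=q^{-(n-1)}$ and $(1-q^n)/(1-q)=[n]$, and it is precisely where the stated shape of the prefactor must emerge; the remaining manipulations are routine.
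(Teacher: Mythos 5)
Your proposal is correct and follows essentially the same route as the paper's own proof: the same decomposition into the pairwise coprime factors $\Phi_n(q)$, $1-aq^n$, $a-q^n$, the same appeal to Lemma \ref{lem-b} (together with the factor $[n]$ on the right) for the modulus $\Phi_n(q)$, and the same key step of evaluating at $a=q^{\pm n}$ and applying Watson's transformation \eqref{eq:watson} to the resulting terminating very-well-poised ${}_8\phi_7$ in base $q^2$. One simplification you missed: no Sears transformation or reversal of summation is needed in the final matching, because Watson's output ${}_4\phi_3$ has numerator parameters $q/cd,\,q/b,\,q^{1+n},\,q^{1-n}$ and denominator parameters $q^2/b,\,q^2/c,\,q^2/d$ (base and argument $q^2$), which is term-by-term exactly the sum on the right-hand side of \eqref{eq:wei-d} at $a=q^{\pm n}$, so the only bookkeeping left is the routine rewriting of Watson's prefactor $(q^3,bq^{1-n};q^2)_{(n-1)/2}\big/\big((bq^2,q^{2-n};q^2)_{(n-1)/2}\big)$ into the stated form $[n](b/q)^{(n-1)/2}(q^2/b;q^2)_{(n-1)/2}\big/(bq^2;q^2)_{(n-1)/2}$.
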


\begin{proof}
When $a=q^{-n}$ or $a=q^n$, the left-hand side of  \eqref{eq:wei-d}
is equal to
\begin{align}
&\sum_{k=0}^{M}[4k+1]\frac{(q^{1-n},q^{1+n},q/b,cq,dq,q;q^2)_k}{(q^{2+n},q^{2-n},bq^2,q^2/c,q^2/d,q^2;q^2)_k}\bigg(\frac{bq}{cd}\bigg)^k
\notag\\[5pt]\:
&= {_8\phi_7}\!\left[\begin{array}{cccccccc} q,& q^{\frac{5}{2}},&
-q^{\frac{5}{2}},  & cq,    & dq, & q/b,   & q^{1+n}, & q^{1-n}
\\[5pt]
  & q^{\frac{1}{2}}, & -q^{\frac{1}{2}}, & q^2/c, & q^2/d, & bq^2,  & q^{2-n}, & q^{2+n}
\end{array};q^2,\, \frac{bq}{cd}
\right]. \label{watson-a}
\end{align}
Via \eqref{eq:watson}, the right-hand side of \eqref{watson-a}
can be rewritten as
\begin{align*}
[n](b/q)^{(n-1)/2}\frac{(q^2/b;q^2)_{(n-1)/2}}{(bq^2;q^2)_{(n-1)/2}}
 {_4\phi_3}\!\left[\begin{array}{cccccccc}
q/cd, &q/b, &q^{1+n}, &q^{1-n}
\\[5pt]
  &q^2/c,&q^2/d, &q^2/b
\end{array};q^2,\, q^2\right].
\end{align*}
This proves that the $q$-supercongruence  \eqref{eq:wei-d} holds
modulo $(1-aq^n)$ or $(a-q^n)$. Lemma \ref{lem-b} implies that
\eqref{eq:wei-d} is also true modulo $\Phi_n(q)$. Since $\Phi_n(q)$, $(1-aq^n)$,
and $(a-q^n)$ are pairwise relatively prime polynomials, we obtain
\eqref{eq:wei-d}.
\end{proof}

In the light of Lemma \ref{lem-c}, the $q$-supercongruence \eqref{eq:wei-d} becomes \eqref{guo-b} when $cd=q$. Letting
$a\to1, b=c=d=1$ in \eqref{eq:wei-d} and using \eqref{polynomial} and  Lemma \ref{lem-c}, we get \eqref{guo-d}. If we take
$a\to1,b=c=1$, $d\to \infty$, then the $q$-supercongruence \eqref{eq:wei-d} reduces to
\eqref{guo-e} under \eqref{polynomial} and  Lemma \ref{lem-c}.

Moreover, letting $a\to1$ and putting $b=c=1$, $d=-1$ in \eqref{eq:wei-d} and employing \eqref{polynomial} and  Lemma \ref{lem-c}, we have
\begin{align*}
&\sum_{k=0}^{M}(-1)^k[4k+1]\frac{(q;q^2)_k^4(q^2;q^2)_k}{(q^2;q^2)_k^4(q^4;q^4)_k}q^{k}
\notag\\[5pt]
&\quad\equiv[n]q^{(1-n)/2}\sum_{k=0}^{(n-1)/2}\frac{(q;q^2)_k^2(q^2;q^4)_k}{(q^2;q^2)_k^2(q^4;q^4)_k}q^{2k}\hspace*{-1.5mm}\pmod{[n]\Phi_n(q)^2}.
\end{align*}
In view of the following $q$-supercongruence (see \cite[Theorem
2]{GuoZu2})
\begin{align*}
\sum_{k=0}^{(n-1)/2}\frac{(q;q^2)_k^2(q^2;q^4)_k}{(q^2;q^2)_k^2(q^4;q^4)_k}q^{2k}
\equiv
\begin{cases} \displaystyle \frac{(q^2;q^4)_{(n-1)/4}^2}{(q^4;q^4)_{(n-1)/4}^2}q^{(n-1)/2} \hspace{-3mm} \pmod{\Phi_n(q)^2}, &\text{if $n\equiv 1\pmod 4$,}\\[15pt]
 0  \pmod{\Phi_n(q)^2}, &\text{if $n\equiv 3\pmod 4$,}
\end{cases}
\end{align*}
we immediately get \eqref{guo-f}.

\section{Proof of Theorem \ref{thm-a}}
In this section, we shall establish a $q$-supercongruence with three
parameters modulo $\Phi_n(q)^2(1-aq^n)(a-q^n)$ by using Theorem
\ref{thm-d} and the Chinese remainder theorem for coprime
polynomials. Then we utilize it to give the proof of
Theorem\ref{thm-a}.

In order to reach the goals, we need the following lemma.

\begin{lem}\label{lem-d}
Let $n$ be a positive odd integer. Then
\begin{align}
&\sum_{k=0}^{M}[4k+1]\frac{(aq,q/a,q/b,cq,dq,q;q^2)_k}{(q^2/a,aq^2,bq^2,q^2/c,q^2/d,q^2;q^2)_k}\bigg(\frac{bq}{cd}\bigg)^k
\notag\\[5pt]
&\quad\equiv[n]\frac{(q;q^2)_{(n-1)/2}^2}{(aq^2,q^2/a;q^2)_{(n-1)/2}}  \notag\\[5pt]
&\quad\quad\times\sum_{k=0}^{(n-1)/2}\frac{(aq,q/a,q/b,q/cd;q^2)_k}{(q^2,q^2/b,q^2/c,q^2/d;q^2)_k}q^{2k}\pmod{(b-q^n)}.
\label{eq:lem-b}
\end{align}
\end{lem}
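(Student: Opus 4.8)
The plan is to exploit the fact that $b-q^n$ is linear in $b$: proving \eqref{eq:lem-b} modulo $(b-q^n)$ is equivalent to verifying that the two sides agree after the substitution $b=q^n$ (the denominators $(q^2/c,q^2/d,\dots;q^2)_k$ do not vanish there, so no pole interferes). Under $b=q^n$ we have $q/b=q^{1-n}$ and $bq^2=q^{2+n}$, and the factor $(q^{1-n};q^2)_k$ now sitting in the numerator of the summand vanishes for $k>(n-1)/2$; hence the left-hand side terminates at $k=(n-1)/2$ regardless of whether $M=(n-1)/2$ or $M=n-1$.

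Next I would recognize the terminated left-hand side as a very-well-poised ${}_8\phi_7$ series in base $q^2$, exactly as in the proof of Theorem \ref{thm-d}. Taking the very-well-poised parameter to be $q$ (so that the factor $[4k+1]=(1-q^{4k+1})/(1-q)$ is reproduced by the pair $q^{5/2},-q^{5/2}$ over $q^{1/2},-q^{1/2}$), the remaining four numerator parameters are $cq,\,dq,\,aq,\,q/a$, the terminating parameter is $q^{1-n}$ with companion $q^{2+n}$, and the argument works out to $q^{1+n}/(cd)=bq/(cd)$. This is precisely the left side of Watson's transformation \eqref{eq:watson} in base $q^2$.

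Applying \eqref{eq:watson} then converts the sum into a closed-form prefactor times a terminating ${}_4\phi_3$. The prefactor produced by Watson's formula is $\frac{(q^3;q^2)_{(n-1)/2}\,(q;q^2)_{(n-1)/2}}{(q^2/a;q^2)_{(n-1)/2}\,(aq^2;q^2)_{(n-1)/2}}$; using the elementary identity $(q^3;q^2)_{(n-1)/2}=[n]\,(q;q^2)_{(n-1)/2}$, which follows from $(q^3;q^2)_{(n-1)/2}/(q;q^2)_{(n-1)/2}=(1-q^n)/(1-q)=[n]$, this collapses to $[n]\frac{(q;q^2)_{(n-1)/2}^2}{(aq^2,q^2/a;q^2)_{(n-1)/2}}$, exactly the prefactor on the right of \eqref{eq:lem-b}. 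A direct computation of the four upper and three lower parameters of the resulting ${}_4\phi_3$ gives $q/cd,\,aq,\,q/a,\,q^{1-n}$ over $q^2/c,\,q^2/d,\,q^{2-n}$, which matches the summand on the right of \eqref{eq:lem-b} after the same substitution $b=q^n$ (where $q/b=q^{1-n}$ and $q^2/b=q^{2-n}$). Hence the two sides of \eqref{eq:lem-b} coincide at $b=q^n$, which is the assertion.

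I expect the only real work to be the bookkeeping in the middle step: checking that each of the eight numerator/denominator slots and the argument of the ${}_8\phi_7$ line up with Watson's normalization, and then tracking the four ${}_4\phi_3$ parameters through the transformation. There is no genuine analytic obstacle here, since the underlying identity is an exact equality of terminating basic hypergeometric series and the congruence is obtained purely by the specialization $b=q^n$.
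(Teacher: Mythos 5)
Your proposal is correct and follows essentially the same route as the paper's own proof: specialize $b=q^n$, recognize the (now terminating) left-hand side as a very-well-poised ${}_8\phi_7$ in base $q^2$ with parameters $cq,dq,aq,q/a$ and terminating parameter $q^{1-n}$, and apply Watson's transformation \eqref{eq:watson} to obtain the prefactor $[n](q;q^2)_{(n-1)/2}^2/(aq^2,q^2/a;q^2)_{(n-1)/2}$ times the ${}_4\phi_3$ on the right of \eqref{eq:lem-b}. Your additional remarks (the linearity of $b-q^n$ justifying evaluation at $b=q^n$, and the simplification $(q^3;q^2)_{(n-1)/2}=[n](q;q^2)_{(n-1)/2}$) only make explicit what the paper leaves implicit.
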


\begin{proof}
When $b=q^{n}$, the left-hand side of \eqref{eq:lem-b} is equal to
\begin{align}
&\sum_{k=0}^{M}[4k+1]\frac{(aq,q/a,q^{1-n},cq,dq,q;q^2)_k}{(q^{2}/a,aq^{2},q^{2+n},q^2/c,q^2/d,q^2;q^2)_k}\bigg(\frac{q^{n+1}}{cd}\bigg)^k
\notag\\[5pt]\:
&= {_8\phi_7}\!\left[\begin{array}{cccccccc} q,& q^{\frac{5}{2}},&
-q^{\frac{5}{2}}, & cq,    & dq,    & aq,    & q/a, & q^{1-n}
\\[5pt]
  & q^{\frac{1}{2}}, & -q^{\frac{1}{2}},  & q^2/c, & q^2/d, & q^2/a, & aq^{2}, & q^{2+n}
\end{array};q^2,\, \frac{q^{n+1}}{cd}
\right]. \label{watson-e}
\end{align}
Via \eqref{eq:watson}, the right-hand side of \eqref{watson-e} can
be rewritten as
\begin{align*}
[n]\frac{(q;q^2)_{(n-1)/2}^2}{(aq^2,q^2/a;q^2)_{(n-1)/2}}
 {_4\phi_3}\!\left[\begin{array}{cccccccc}
q/cd, &aq, &q/a, &q^{1-n}
\\[5pt]
  &q^2/c,&q^2/d, &q^{2-n}
\end{array};q^2,\, q^2\right].
\end{align*}
This proves that Lemma \ref{lem-b} holds modulo $(b-q^n)$ .
\end{proof}

We now give a parametric generaliztation of Theorem \ref{thm-a}.

\begin{thm}\label{thm-g}
Let $n$ be a positive odd integer. Then, modulo
$\Phi_n(q)^2(1-aq^n)(a-q^n)$,
\begin{align*}
&\sum_{k=0}^{M}[4k+1]\frac{(aq,q/a,cq,dq;q^2)_k(q;q^2)_k^2}{(q^2/a,aq^2,q^2/c,q^2/d;q^2)_k(q^2;q^2)_k^2}\bigg(\frac{q}{cd}\bigg)^k
\\[5pt]\:
&\:\equiv[n]\Omega_q(a,n)
\sum_{k=0}^{(n-1)/2}\frac{(aq,q/a,q/cd,q;q^2)_k}{(q^2/c,q^2/d;q^2)_k(q^2;q^2)_k^2}q^{2k},
\end{align*}
where the notation on the right-hand side denotes
\begin{align*}
\Omega_q(a,n)=q^{(1-n)/2}
+q^{(1-n)/2}\frac{(1-aq^n)(a-q^n)}{(1-a)^2}\bigg\{1-\frac{n(1-a)a^{(n-1)/2}}{1-a^n}\bigg\}.
\end{align*}
\end{thm}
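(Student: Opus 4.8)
The plan is to read the left-hand side of Theorem~\ref{thm-g} as the value at $b=1$ of the four-parameter sum governed by Theorem~\ref{thm-d} and Lemma~\ref{lem-d}. Write $S(b)$ for the common left-hand side of those two results and $T(b)$ for the common terminating sum on their right-hand sides; then the left-hand side of Theorem~\ref{thm-g} is exactly $S(1)$ and the sum on its right-hand side is $T(1)$. Theorem~\ref{thm-d} supplies $S(b)\equiv[n]\,f(b)\,T(b)\pmod{\Phi_n(q)(1-aq^n)(a-q^n)}$ with $f(b)=(b/q)^{(n-1)/2}(q^2/b;q^2)_{(n-1)/2}/(bq^2;q^2)_{(n-1)/2}$, and Lemma~\ref{lem-d} supplies $S(b)\equiv[n]\,g\,T(b)\pmod{b-q^n}$ with $g=(q;q^2)_{(n-1)/2}^2/\bigl((aq^2;q^2)_{(n-1)/2}(q^2/a;q^2)_{(n-1)/2}\bigr)$. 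Because $\Phi_n(q)(1-aq^n)(a-q^n)$ and $b-q^n$ are pairwise coprime, the Chinese remainder theorem for coprime polynomials fuses these into one congruence modulo the product $\Phi_n(q)(1-aq^n)(a-q^n)(b-q^n)$.

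The decisive manoeuvre is then to substitute $b=1$. Since $(b-q^n)\big|_{b=1}=1-q^n=(1-q)[n]$ is divisible by $\Phi_n(q)$, the product modulus becomes divisible by $\Phi_n(q)^2(1-aq^n)(a-q^n)$, which is precisely the modulus claimed; thus the substitution is what upgrades the first power of $\Phi_n(q)$ to the second. It remains to evaluate the fused right-hand side at $b=1$ to second order in $\Phi_n(q)$. I would do this with a finite-difference argument: writing $S(b)-[n]f(b)T(b)=\Phi_n(q)E(b)$, the relation $q^n\equiv1\pmod{\Phi_n(q)}$ forces $E(1)\equiv E(q^n)\pmod{\Phi_n(q)}$, and feeding in $S(q^n)=[n]gT(q^n)$ from Lemma~\ref{lem-d} yields
\[
S(1)\equiv[n]f(1)T(1)+[n]\,T(q^n)\bigl(g-f(q^n)\bigr)\pmod{\Phi_n(q)^2}.
\]
Crucially the correction carries the factor $[n]$, itself divisible by $\Phi_n(q)$, so only the residues of $f$, $g$, and $T$ modulo $\Phi_n(q)$ are needed to pin the answer down modulo $\Phi_n(q)^2$.

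Next I would compute those residues. From $q^n\equiv1$ one gets $T(q^n)\equiv T(1)$ and $f(q^n)\equiv q^{(1-n)/2}\equiv f(1)\pmod{\Phi_n(q)}$, so the correction collapses to $[n]\,T(1)\bigl(g-q^{(1-n)/2}\bigr)$. The heart of the matter is the evaluation $g\equiv q^{(1-n)/2}\dfrac{n(1-a)a^{(n-1)/2}}{1-a^n}\pmod{\Phi_n(q)}$, which I would obtain from the roots-of-unity product identities $\prod_{i=0}^{n-1}(1-aq^i)\equiv1-a^n$ and $\prod_{i=1}^{n-1}(1-q^i)\equiv n$ modulo $\Phi_n(q)$, combined with the involution $j\mapsto n-j$ that interchanges the odd and even residues and hence turns $(aq^2;q^2)_{(n-1)/2}(q^2/a;q^2)_{(n-1)/2}$ and $(q;q^2)_{(n-1)/2}^2$ into controllable products (alternatively via Lemma~\ref{lem-a} with $k=0$). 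Granting this, the right-hand side reduces modulo $\Phi_n(q)^2$ to $[n]\,T(1)\,q^{(1-n)/2}\frac{n(1-a)a^{(n-1)/2}}{1-a^n}$, which equals $[n]\Omega_q(a,n)T(1)$ because $(1-aq^n)(a-q^n)\equiv-(1-a)^2\pmod{\Phi_n(q)}$ makes the bracketed part of $\Omega_q(a,n)$ collapse to exactly that factor. Finally the residues modulo $1-aq^n$ and modulo $a-q^n$ come directly from Theorem~\ref{thm-d} at $b=1$: the correction term of $\Omega_q(a,n)$ carries the explicit factor $(1-aq^n)(a-q^n)$, so $\Omega_q(a,n)\equiv q^{(1-n)/2}=f(1)$ modulo each of them, matching $S(1)\equiv[n]f(1)T(1)$. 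Recombining the three coprime pieces $\Phi_n(q)^2$, $1-aq^n$, $a-q^n$ by the Chinese remainder theorem delivers the stated congruence.

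The step I expect to be the main obstacle is the exact evaluation of $g$ modulo $\Phi_n(q)$ and the confirmation that, after clearing the common factor $[n]$, the assembled prefactor is literally $\Omega_q(a,n)$; tracking the power of $q$ and the $a$-rational factor $n(1-a)a^{(n-1)/2}/(1-a^n)$ through the product manipulations, and checking compatibility with the $1-aq^n$ and $a-q^n$ components, is the delicate bookkeeping on which the precise shape of $\Omega_q(a,n)$ depends.
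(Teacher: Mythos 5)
Your proof is correct, and its skeleton is the same as the paper's: combine Theorem \ref{thm-d} (a congruence modulo $\Phi_n(q)(1-aq^n)(a-q^n)$) with Lemma \ref{lem-d} (a congruence modulo $b-q^n$), then specialize $b=1$ so that $b-q^n$ becomes $1-q^n$ and donates the second power of $\Phi_n(q)$; the key residue $g\equiv q^{(1-n)/2}n(1-a)a^{(n-1)/2}/(1-a^n)\pmod{\Phi_n(q)}$ and the collapse $(1-aq^n)(a-q^n)\equiv-(1-a)^2\pmod{\Phi_n(q)}$ are likewise identical. Where you genuinely diverge is in the mechanics of the fusion. The paper constructs the explicit CRT interpolation $R_q(a,b,n)$, with coefficients $(b-q^n)(ab-1-a^2+aq^n)/((a-b)(1-ab))$ and $(1-aq^n)(a-q^n)/((a-b)(1-ab))$, records the standalone four-parameter congruence \eqref{eq:par} modulo $\Phi_n(q)(1-aq^n)(a-q^n)(b-q^n)$, lets $b\to1$ to obtain the prefactor $S_q(a,n)$, and then verifies $[n]S_q(a,n)\equiv[n]\Omega_q(a,n)$ using the residue above, quoted from \cite[Lemma 2.1]{Guopreprint}. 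You never form $R_q$: your finite-difference step, $E(1)\equiv E(q^n)\pmod{\Phi_n(q)}$ for $E(b)=\bigl(S(b)-[n]f(b)T(b)\bigr)/\Phi_n(q)$, combined with the fact that Lemma \ref{lem-d} is an exact Watson-transformation identity at $b=q^n$, yields the modulo-$\Phi_n(q)^2$ component directly; the $(1-aq^n)$- and $(a-q^n)$-components then come from Theorem \ref{thm-d} at $b=1$, and a three-way Chinese remainder argument finishes. The two mechanisms are logically equivalent (your difference argument is the CRT carried out implicitly), but yours avoids both guessing the interpolation coefficients and the algebraic simplification of $S_q(a,n)$ into $\Omega_q(a,n)$, whereas the paper's route produces \eqref{eq:par} as a by-product of independent interest. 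Your self-contained derivation of the residue of $(q;q^2)_{(n-1)/2}^2/(aq^2,q^2/a;q^2)_{(n-1)/2}$ from $\prod_{i=0}^{n-1}(1-aq^i)\equiv1-a^n$, $\prod_{i=1}^{n-1}(1-q^i)\equiv n$ and the reversal supplied by Lemma \ref{lem-a} with $k=0$ is sound and replaces the paper's citation; note that the reversal alone would not suffice, so the product identities are essential there. The only caveat, shared with the paper's own level of rigor, is that these manipulations presuppose that the relevant denominators (for instance those of $E$ at $b=1$ and $b=q^n$, and of $T$ for generic $c,d$) remain coprime to $\Phi_n(q)$, which indeed holds here.
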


\begin{proof}
It is clear that the polynomials $(1-aq^n)(a-q^n)$ and $(b-q^n)$ are
relatively prime. Noting the $q$-congruences
\begin{align*}
&\frac{(b-q^n)(ab-1-a^2+aq^n)}{(a-b)(1-ab)}\equiv1\pmod{(1-aq^n)(a-q^n)},
\\[5pt]
&\qquad\qquad\frac{(1-aq^n)(a-q^n)}{(a-b)(1-ab)}\equiv1\pmod{(b-q^n)}
\end{align*}
and employing the Chinese remainder theorem for coprime polynomials,
we can derive, from Lemmas \ref{lem-b} and \ref{lem-d} and Theorem
\ref{thm-d}, the following $q$-congruence: modulo
$\Phi_n(q)(1-aq^n)(a-q^n)(b-q^n)$,
\begin{align}
&\sum_{k=0}^{M}[4k+1]\frac{(aq,q/a,q/b,cq,dq,q;q^2)_k}{(q^2/a,aq^2,bq^2,q^2/c,q^2/d,q^2;q^2)_k}\bigg(\frac{bq}{cd}\bigg)^k
\notag\\[5pt]\:
&\quad\equiv[n]R_q(a,b,n)
\sum_{k=0}^{(n-1)/2}\frac{(aq,q/a,q/b,q/cd;q^2)_k}{(q^2,q^2/b,q^2/c,q^2/d;q^2)_k}q^{2k},
\label{eq:par}
\end{align}
where
\begin{align*}
R_q(a,b,n)&=\frac{(b-q^n)(ab-1-a^2+aq^n)}{(a-b)(1-ab)}\frac{(b/q)^{(n-1)/2}(q^2/b;q^2)_{(n-1)/2}}{(bq^2;q^2)_{(n-1)/2}}
\\[5pt]
&\quad+\frac{(1-aq^n)(a-q^n)}{(a-b)(1-ab)}\frac{(q;q^2)_{(n-1)/2}^2}{(aq^2,q^2/a;q^2)_{(n-1)/2}}.
\end{align*}

Letting $b\to 1$ in \eqref{eq:par}, we conclude that, modulo $\Phi_n(q)^2(1-aq^n)(a-q^n)$,
\begin{align}
&\sum_{k=0}^{M}[4k+1]\frac{(aq,q/a,cq,dq;q^2)_k(q;q^2)_k^2}{(q^2/a,aq^2,q^2/c,q^2/d;q^2)_k(q^2;q^2)_k^2}\bigg(\frac{q}{cd}\bigg)^k
\notag\\[5pt]\:
&\:\equiv[n]S_q(a,n)
\sum_{k=0}^{(n-1)/2}\frac{(aq,q/a,q/cd,q;q^2)_k}{(q^2/c,q^2/d;q^2)_k(q^2;q^2)_k^2}q^{2k},
\label{formula-a}
\end{align}
where
\begin{align*}
S_q(a,n)=\frac{(1-q^n)(1+a^2-a-aq^n)}{(1-a)^2}q^{(1-n)/2}
-\frac{(1-aq^n)(a-q^n)}{(1-a)^2}\frac{(q;q^2)_{(n-1)/2}^2}{(aq^2,q^2/a;q^2)_{(n-1)/2}}.
\end{align*}
Two formulas due to Guo \cite[Lemma 2.1]{Guopreprint} can be stated
as
\begin{align*}
&(aq^2,q^2/a;q^2)_{(n-1)/2}
\equiv(-1)^{(n-1)/2}\frac{(1-a^n)q^{-(n-1)^2/4}}{(1-a)a^{(n-1)/2}}\pmod{\Phi_n(q)},
\\[5pt]
&(aq,q/a;q^2)_{(n-1)/2}
\equiv(-1)^{(n-1)/2}\frac{(1-a^n)q^{(1-n^2)/4}}{(1-a)a^{(n-1)/2}}\pmod{\Phi_n(q)},
\end{align*}
from which we deduce that
\begin{align*}
\frac{(q;q^2)_{(n-1)/2}^2}{(aq^2,q^2/a;q^2)_{(n-1)/2}}\equiv\frac{n(1-a)a^{(n-1)/2}}{(1-a^n)q^{(n-1)/2}}\pmod{\Phi_n(q)},
\end{align*}
and so
\begin{align}
[n]S_q(a,n)\equiv[n]\Omega_q(a,n)\pmod{\Phi_n(q)^2(1-aq^n)(a-q^n)}.\label{formula-b}
\end{align}
Using \eqref{formula-a} and \eqref{formula-b}, we gain the desired result.
\end{proof}

\begin{proof}[Proof of Theorem \ref{thm-a}]
By L'H\^{o}spital's rule, we have
\begin{align*}
&\lim_{a\to1}\frac{(1-aq^n)(a-q^n)}{(1-a)^2}\bigg\{1-\frac{n(1-a)a^{(n-1)/2}}{1-a^n}\bigg\}\\[5pt]
&=\lim_{a\to1}\frac{(1-aq^n)(a-q^n)\{1-a^n-n(1-a)a^{(n-1)/2}\}}{(1-a)^2(1-a^n)}\\[5pt]
&=[n]^2\frac{(n^2-1)(1-q)^2}{24}.
\end{align*}
Letting $a\to1$ in Theorem \ref{thm-g} and utilizing the above
limit, we arrive at \eqref{eq:wei} through \eqref{polynomial} and  Lemma \ref{lem-c}.
\end{proof}

On the basis of numerical calculations, we would like to put forward
the following conjecture.
\begin{conj}\label{conj}
Let $n$ be a positive integer with $n\equiv 3\pmod{4}$. Then
\begin{align*}
\sum_{k=0}^{M} (-1)^k[4k+1]\frac{(q;q^2)_k^4(q^2;q^{4})_k}
{(q^2;q^2)_k^4(q^4;q^4)_k}q^k \equiv
[n]^2\frac{(q^3;q^4)_{(n-1)/2}}{(q^5;q^4)_{(n-1)/2}}q^{(1-n)/2}
\pmod{[n]\Phi_n(q)^4},
\end{align*}
where $M=(n-1)/2$ or $n-1$. In particular, for $p\equiv 3\pmod{4}$,
\begin{equation*}
\sum_{k=0}^{(p^r-1)/2}(-1)^k(4k+1)\frac{(\frac{1}{2})_k^5}{k!^5}\equiv
p^{2r}\frac{(\frac{3}{4})_{(p^r-1)/2}}{(\frac{5}{4})_{(p^r-1)/2}}\pmod{p^{r+4}}.
\end{equation*}
\end{conj}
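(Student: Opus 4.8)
The conjecture is exactly the specialization $c=1,\ d=-1$ of Theorem~\ref{thm-a} for $n\equiv3\pmod4$, \emph{sharpened by one power of} $\Phi_n(q)$: Theorem~\ref{thm-a} delivers the left-hand side modulo $[n]\Phi_n(q)^3$, while the conjecture asserts it modulo $[n]\Phi_n(q)^4$. My first step is to locate the structural reason this extra power is available only when $n\equiv3\pmod4$. As $j$ runs through $0,1,\dots,(n-3)/2$, the exponent $3+4j$ runs through the residues $\equiv3\pmod4$ between $3$ and $2n-3$, so it attains the value $n$ (here we use $n\equiv3\pmod4$); hence the numerator $(q^3;q^4)_{(n-1)/2}$ contains the factor $1-q^n$, whereas the denominator $(q^5;q^4)_{(n-1)/2}$ has all exponents $\equiv1\pmod4$ and is coprime to $\Phi_n(q)$. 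Therefore, for $n\equiv3\pmod4$,
\[
\frac{(q^3;q^4)_{(n-1)/2}}{(q^5;q^4)_{(n-1)/2}}\equiv0\pmod{\Phi_n(q)}.
\]

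\textbf{Reduction to a one-order-stronger base congruence.} Granting this, I would rerun the confirmation of Guo's conjecture \eqref{guo-conjecture} but one order higher. Suppose one can prove that Theorem~\ref{thm-a} with $c=1,d=-1$ holds modulo $[n]\Phi_n(q)^4$ and, simultaneously, that the evaluation \eqref{guo-a} holds modulo $\Phi_n(q)^4$. Multiplying the two and discarding the cross error terms (each a multiple of $[n]\Phi_n(q)^4$) then gives
\begin{align*}
&\sum_{k=0}^{M}(-1)^k[4k+1]\frac{(q;q^2)_k^4(q^2;q^4)_k}{(q^2;q^2)_k^4(q^4;q^4)_k}q^k\\
&\qquad\equiv[n]^2q^{(1-n)/2}\Big\{1+[n]^2\tfrac{(n^2-1)(1-q)^2}{24}\Big\}\frac{(q^3;q^4)_{(n-1)/2}}{(q^5;q^4)_{(n-1)/2}}\pmod{[n]\Phi_n(q)^4}.
\end{align*}
The correction term is now negligible: it equals $\tfrac{(n^2-1)(1-q)^2}{24}\,q^{(1-n)/2}\,[n]^4\,\frac{(q^3;q^4)_{(n-1)/2}}{(q^5;q^4)_{(n-1)/2}}$, which by the displayed divisibility is a multiple of $[n]^4\Phi_n(q)$; since $[n]^4\Phi_n(q)=[n]\Phi_n(q)^4\cdot\big([n]/\Phi_n(q)\big)^3$ and $[n]/\Phi_n(q)$ is a polynomial, it is divisible by $[n]\Phi_n(q)^4$. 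Dropping it yields precisely the stated right-hand side $[n]^2q^{(1-n)/2}\frac{(q^3;q^4)_{(n-1)/2}}{(q^5;q^4)_{(n-1)/2}}$. I emphasise that at this finer modulus the power $q^{(1-n)/2}$ is genuinely forced and is \emph{not} interchangeable with the $q^{(1+n)/2}$ of \eqref{guo-conjecture}: their difference is only a multiple of $[n]^3\Phi_n(q)$, which carries one factor of $\Phi_n(q)$ too few. Thus the conjecture is equivalent to the two one-order sharpenings above.

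\textbf{Carrying out the sharpenings.} For \eqref{guo-a} I would revisit Lemma~\ref{lem-zero}: the closed form $[n]\frac{(q^3;q^4)_{(n-1)/2}}{(q^5;q^4)_{(n-1)/2}}$ comes from the terminating ${}_{4}\phi_{3}$-evaluation \eqref{eq:whipple-a}, and the task is to bound the truncated sum against it to order $\Phi_n(q)^4$, which should follow by carrying the same ${}_{8}\phi_{7}\to{}_{4}\phi_{3}$ reduction \eqref{eq:watson} to the next order in the microscoping parameter. For Theorem~\ref{thm-a} I would strengthen the chain of Section~4. The power $\Phi_n(q)^2$ of Theorem~\ref{thm-g} arises because the four-parameter congruence \eqref{eq:par}, valid modulo $\Phi_n(q)(1-aq^n)(a-q^n)(b-q^n)$, gains one factor of $\Phi_n(q)$ on letting $b\to1$, as $b-q^n\to1-q^n=(1-q)[n]$. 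The plan is to introduce one further auxiliary parameter $e$, establish a five-parameter congruence modulo $\Phi_n(q)(1-aq^n)(a-q^n)(b-q^n)(e-q^n)$ by evaluating the governing ${}_{8}\phi_{7}$ at $e=q^n$ through \eqref{eq:watson} and recombining with Lemmas~\ref{lem-b} and \ref{lem-d} via the Chinese remainder theorem for coprime polynomials, and then to send $b,e\to1$. The two limits contribute two factors $1-q^n$, giving a congruence modulo $\Phi_n(q)^3(1-aq^n)(a-q^n)$; finally $a\to1$ together with Lemma~\ref{lem-c} promotes this to $[n]\Phi_n(q)^4$.

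\textbf{Assembly and main obstacle.} Combining the two sharpened ingredients as above proves the $q$-supercongruence for $M=(n-1)/2$; the case $M=n-1$ follows because every term with $(n-1)/2<k\le n-1$ carries $(q;q^2)_k^4(q^2;q^4)_k$, hence the factor $(1-q^n)^4(1-q^{2n})$, while its denominator is coprime to $\Phi_n(q)$, so the tail is divisible by $[n]\Phi_n(q)^4$. The $p$-adic statement is then the limit $q\to1$ with $n=p^r$ and $M=(p^r-1)/2$. The main obstacle is the higher-order microscoping: one must produce the correct closed form for the five-parameter ${}_{8}\phi_{7}$ specialized at $a=q^{\pm n}$, $b=q^n$, $e=q^n$, and---most delicately---verify that the resulting prefactor (the analogue of $\Omega_q(a,n)$) introduces no unabsorbed term at $\Phi_n(q)$-valuation $4$ after the $a\to1$ limit, i.e.\ that it still collapses to $q^{(1-n)/2}$ with only a correction divisible by $[n]^2$. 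Sharpening \eqref{guo-a} to modulo $\Phi_n(q)^4$ is a second, independent difficulty of the same analytic flavour; once both are in hand, the remainder is the bookkeeping already developed in Sections~3--4.
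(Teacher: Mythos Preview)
The statement you chose is presented in the paper as an \emph{open conjecture} (``On the basis of numerical calculations, we would like to put forward the following conjecture''); there is no proof in the paper to compare your attempt against. What you have written is therefore a research outline, and it should be assessed as such.

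Your reduction is sound: the conjecture would indeed follow from the two one-order sharpenings you isolate, your observation that $(q^3;q^4)_{(n-1)/2}\equiv0\pmod{\Phi_n(q)}$ for $n\equiv3\pmod4$ is correct, and your treatment of the correction term and of the $q^{(1-n)/2}$ versus $q^{(1+n)/2}$ distinction is accurate. The genuine gap lies in actually \emph{proving} the two sharpenings. First, the very-well-poised ${}_8\phi_7$ underlying Theorem~\ref{thm-d} has no spare slot for an additional parameter~$e$: apart from the base $q$ and the well-poised pair $\pm q^{5/2}$, the five numerator parameters $aq,\,q/a,\,q/b,\,cq,\,dq$ already exhaust the series, so ``introducing one further auxiliary parameter'' is not literally possible. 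If instead you mean to repurpose $c$ (or $d$) as a microscoping parameter before specializing to $c=1$, $d=-1$, then Watson's transformation at $c=q^{-n}$ produces a ${}_4\phi_3$ whose numerator parameters differ from those in Theorem~\ref{thm-d} and Lemma~\ref{lem-d}; the three right-hand sides no longer share a common sum, and the Chinese-remainder assembly of the prefactor (the analogue of $R_q(a,b,n)$) becomes substantially more delicate---you have not shown that it still collapses after $a,b,c\to1$. Second, your sharpening of \eqref{guo-a} to modulus $\Phi_n(q)^4$ is merely asserted (``should follow by carrying the same reduction to the next order''); Guo's proof of \eqref{guo-a} already uses the full strength of \eqref{eq:whipple-a}, and no mechanism is offered for the extra factor. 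These two obstacles are precisely what makes the statement a conjecture rather than a corollary of the paper's methods.
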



\end{document}